\documentclass[a4paper,11pt]{article}
\usepackage{stmaryrd}
\usepackage[utf8]{inputenc} 
\usepackage{amsfonts}
\usepackage{amssymb}
\usepackage{amsthm}
\usepackage{amsmath}
\usepackage{a4wide}
\usepackage{mathrsfs}
\usepackage{epsfig}
\usepackage{comment}
\usepackage{palatino}
\usepackage{tikz}
\usepackage{fp,ifthen}
\usepackage{esint}
\usepackage{nicefrac}
\usetikzlibrary{decorations.pathreplacing}
\usepackage{float}
\usepackage{enumerate} 
\usepackage{enumitem} 

\usepackage[a4paper,twoside,top=2.5cm, bottom=2cm, left=2.3cm, right=2.3cm]{geometry} 

 \usepackage{lipsum}

\usepackage[colorinlistoftodos,textwidth=2.3cm]{todonotes} 

\newcommand{\pdfgraphics}{\ifpdf\DeclareGraphicsExtensions{.pdf,.jpg}\else\fi}
\usepackage{graphicx}

\usepackage{color}
\definecolor{hanblue}{rgb}{0.27, 0.42, 0.81}
\definecolor{red}{rgb}{1.0, 0.0, 0.0}
\usepackage[colorlinks, citecolor=blue,linkcolor=blue, urlcolor = blue]{hyperref}
\usepackage[english,capitalize]{cleveref}

\usepackage{mathtools}

\theoremstyle{plain}

\newtheorem{theorem}{Theorem}[section]
\newtheorem{lemma}[theorem]{Lemma}
\newtheorem{proposition}[theorem]{Proposition}

\theoremstyle{definition}
\newtheorem{definition}[theorem]{Definition}

\newcommand{\dist}{\operatorname{d}}

\theoremstyle{remark}

\newcommand{\m}{\ensuremath{\mathfrak m}}

\DeclareMathOperator\Lip{Lip}
\newcommand{\X}{\mathsf{X}} 
\newcommand{\Y}{\mathsf{Y}} 

\numberwithin{equation}{section}

\newcommand{\de}{\ensuremath{\,\mathrm d}} 
\renewcommand{\d}{\ensuremath{\mathrm d}} 

\renewcommand{\epsilon}{\varepsilon}
\newcommand{\N}{\ensuremath{\mathbb N}}
\newcommand{\R}{\ensuremath{\mathbb R}}

\DeclareMathOperator*{\argmin}{arg\,min}

\makeatletter
\renewcommand*\env@matrix[1][*\c@MaxMatrixCols c]{%
  \hskip -\arraycolsep
  \let\@ifnextchar\new@ifnextchar
  \array{#1}}
\makeatother

\pdfgraphics 
\begin{document}
\title{Lipschitz continuity of harmonic maps between ${\rm RCD}(K,N)$ spaces and ${\rm CAT}(\kappa)$ spaces}

\author{Luca Gennaioli \footnote{\href{L.Gennaioli@warwick.ac.uk}{L.Gennaioli@warwick.ac.uk}}}

\date{\today}

\maketitle

\begin{abstract}
We are going to prove that energy minimizing harmonic maps from a domain $\Omega$ inside an ${\rm RCD}(K,N)$ whose image lies in a small ball inside a ${\rm CAT}(\kappa)$ space are locally Lipschitz continuous. This completes the picture concerning regularity of harmonic maps between singular spaces and justifies a variant of the Bochner-Eells-Sampson inequality between singular spaces.
\end{abstract}
\tableofcontents

\section{Introduction}
This paper is concerned with the regularity property of harmonic maps from ${\rm RCD}(K,N)$ spaces to ${\rm CAT}(\kappa)$ spaces. The former consists of all ${\rm CD}(K,N)$ metric measure spaces where, roughly speaking, one can impose a lower bound on the Ricci curvature $K$, an upper bound on the dimension $N$ and impose an hilbertian structure. Indeed general ${\rm CD}(K,N)$ in general do not enjoy several "Riemannian" theorems, like the splitting theorem for example. This Riemannian structure was added in \cite{Gigli12}, where the author (see \cite{gigli2023giorgi} for a rather complete account on this subject) introduced the notion of \emph{infinitesimal hilbertianity}, singling out the class of ${\rm RCD}(K,N)$ spaces. Of course the various contributions to this topic are numerous. We recall here some of the most significant ones: \cite{AmbrosioGigliSavare11}, \cite{AmbrosioGigliSavare11-2} (the infinite dimensional case), \cite{AmbrosioGigliSavare12}, \cite{AmbrosioGigliSavare-compact}, \cite{AmbrosioGigliMondinoRajala12}, \cite{Gigli14},  \cite{Gigli13} and \cite{Gigli-Kuwada-Ohta10}.
\newline
For what concerns ${\rm CAT}(\kappa)$ spaces, they are metric spaces where one can express synthetically the fact of having sectional curvature bounded from above by a value $\kappa$ and such terminology was used for the first time in \cite{G87}.
\newline
To convince the reader of the fact that this is the natural non-smooth setting where one can hope to obtain "good" regularity properties of harmonic functions (arguably the class of functions which should be the most regular), we recall that if $u:\Omega\subset (M^n,g)\to (N^m,h)$ with ${\rm Ric}_{g_M}\geq0$ (to illustrate the principle we stick to nonnegative Ricci curvature but a general lower bound would be enough) and ${\rm Sec}_{g_N}\leq0$ (again to illustrate the principle we stick to nonpositive sectional curvature) is harmonic, one can write the Bochner-Eells-Sampson formula 
\begin{equation}
\label{eq:BES_formula}
    \Delta\bigg(\frac{|\de u|_{\sf HS}^2}{2}\bigg)=|\nabla\de u|_{\sf HS}^2+{\rm Ric}_{g_M}(\nabla u,\nabla u)-\sum_{i,j\leq n}\langle u_*\mathcal{R}^N(e_i,e_j)e_i,e_j\rangle,
\end{equation}
where $\mathcal{R}^N$ is the Riemann tensor of $N$ and $|\nabla\de u|^2_{\rm HS}$ denotes the square Hilbert-Schmidt norm of the Hessian of $u$. Using the inequalities on the Ricci and on the sectional curvature we get
\begin{equation*}
    \Delta\bigg(\frac{|\de u|_{\sf HS}^2}{2}\bigg)\geq|\nabla\de u|_{\sf HS}^2\geq 0,
\end{equation*}
and a simple application of Harnack's inequality allows to infer that $|\de u|_{\rm HS}$ is locally bounded, proving local Lipschitz regularity of $u$. 
\newline
For what concerns the non-smooth setting, the problem of regularity is tied to the availability of \eqref{eq:BES_formula}, which was instead a tool to exploit in the smooth setting. The first contributions appeared for what concerns local Lipschitz regularity of harmonic maps between Alexandrov spaces (i.e. with bounds on the sectional curvature): for real-valued maps we have \cite{Pet03}, while the problem was solved in \cite{ZZ18}. For what concerns the case of a smooth domain inside a manifold $(M,g_M)$ with ${\rm Ric}_{g_M}\geq K$ and a non-smooth target with sectional curvature bounded above by $\kappa$, in \cite{Ser95} was able to achieve the sought regularity because, strongly exploiting the smoothness, he was able to write a PDE of the form
\begin{equation}
    \label{eq:Serb_PDE}
    \tfrac{|\de u|_{\sf HS}}{\cos(f)}{\rm div}\Big(\cos^2(f)\nabla\Big(\tfrac{|\de u|_{\sf HS}}{\cos(f)}\Big)\Big)\geq K|\de u|^2_{\sf HS},
\end{equation}
where $f(x):=\sqrt{\kappa}{\sf d}_\Y(u(x),p)$ and $(\Y,\d_\Y)$ is the target ${\rm CAT}(\kappa)$ space and $u:\Omega\subset M\to\Y$ is a minimizing harmonic map with $u(\Omega)\subset B_\rho(o)$, for some $o\in\Y$ and $\rho<\frac{\pi}{2\sqrt{\kappa}}$. Equation \eqref{eq:Serb_PDE} allows for a Moser iteration and again implies the local Lipschitz regularity. We want to remark that the constraint on the image of $u$, namely that $u(\Omega)\subset B_\rho(o)$, is not a technicality but it is crucial to obtain such regularity and even continuity. The heuristic reason for which this is important is that the map $y\mapsto\d_\Y(o,y)$ cease to be geodesically convex outside $B_\rho(o)$. Indeed if one considers the vortex map $u:B_1(0)\subset\R^7\to\mathbb S^6$ with $u(x)=\frac{x}{|x|}$, such map is an energy minimizing harmonic map (among maps with the same boundary datum) and it is clearly not continuous (see \cite{JK83}, \cite{L87} and \cite{CG89}).
Finally, moving to the ${\rm RCD}$ setting for what concerns the domain (which in view of the previous discussion appears to be the natural setting), in the works \cite{Gig23} and \cite{MS22} the authors were able to independently establish the local Lipschitz regularity of energy minimizing harmonic maps, together with a variant of the Bochner-Eells-Sampson formula where one has to replace $|\de u|_{\rm HS}$ with ${\rm lip}(u)$ , i.e. the local Lipschitz constant of $u$ (see \cite[Theorem 7.1]{MS22} for the inequality with an Hessian-type term). However both of the previous work do not pursue the regularity of maps into general ${\rm CAT}(\kappa)$ spaces as their strategy is designed to exploit a variant of \eqref{eq:BES_formula} in a crucial way. When $\kappa>0$ such an inequality cannot immediately be exploited as on the r.h.s. one would have a term of the type $\kappa\|\de u\|_{\rm HS}^4$, which is not clearly an $L^1_{\rm loc}$ function. Moreover, using their proof strategy, to actually be even able to write this term one would already need to require that the map $u$ is locally Lipschitz. Therefore one needs to completely change the strategy and step by step try to upgrade the regularity of the map $u$ and more precisely of its Hopf-Lax regularization. Using this type of regularization was the main idea introduced in \cite{ZZ18}, which led the authors in \cite{Gig23} and \cite{MS22} to adapt this strategy to their setting (in an extremely non-trivial way).
\newline
In this paper we are going to exploit ourselves this Hopf-Lax regularization scheme and, building on the recent higher integrability of $|\de u|_{\rm HS}$ for harmonic maps (see Theorem \ref{thm:higher_integrability}) established in \cite{GGZZ26}. Another crucial tool will be the H\"older regularity of the map $u$ (see \ref{thm:Holder_reg}, established in \cite[Theorem 3.12]{GGZZ26}). Finally, exploting all of these results and a further regularization of the Hopf-Lax semigroup, we are going to establish an inequality of the type \eqref{eq:Serb_PDE} (see \eqref{eq:Serb_ineq}). The latter will then yield the sought local Lipschitz continuity via a standard argument.
\newline
We shall state and prove all of our result in the setting of ${\rm CAT}(1)$ spaces, as the case of ${\rm CAT}(\kappa)$ spaces with $\kappa>0$ can be recovered by scaling the distance function on the target space. Our main Theorem is the following.
\begin{theorem}
\label{thm:Lipschitz_continuity}
Let $(\X,\d_\X,\m)$ be an $\operatorname{RCD}(K,N)$ space, and let
$(\Y,\d_\Y)$ be a $\operatorname{CAT}(1)$ space. Let
\begin{equation*}
u\colon \Omega\subset X\longrightarrow Y
\end{equation*}
be a harmonic map with $\Omega$ bounded open set and $\m(\X\setminus\Omega)>0$ such that
\begin{equation*}
u(\Omega)\subset B_\rho(o),
\qquad
\rho<\frac{\pi}{2}.
\end{equation*}
Then $u$ is locally Lipschitz continuous.
\end{theorem}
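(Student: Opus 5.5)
The plan is to run the Hopf--Lax regularization scheme of \cite{ZZ18}, as adapted to the ${\rm RCD}$ setting in \cite{Gig23,MS22}. The crucial change is that the curvature term is kept inside a weighted divergence, in the spirit of Serbinowski's inequality \eqref{eq:Serb_PDE}, rather than treated as a source term. Fix a ball $B_{2R}(x_0)\Subset\Omega$.

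\textbf{Step 1: regularize $u$ on the domain side.} For $t>0$ and $x,y\in B_R(x_0)$, set
\begin{equation*}
f_t(x):=\sup_{y}\Big\{\d_\Y\big(u(x),u(y)\big)-\tfrac{\d_\X(x,y)^2}{2t}\Big\}.
\end{equation*}
Equivalently, one works with the Hopf--Lax semigroup applied to the distance between $u(x)$ and $u(y)$. The Hölder regularity of Theorem \ref{thm:Holder_reg} ensures that the supremum is attained at points $y$ with $\d_\X(x,y)\to0$ as $t\to0$, uniformly on $B_R(x_0)$. The key observation is that
\begin{equation*}
\limsup_{t\to0} \frac{f_t(x)}{t}\gtrsim {\rm lip}(u)(x)^2 .
\end{equation*}
A local $L^\infty$ bound on $f_t/t$, uniform in $t$, therefore gives local Lipschitz continuity.

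\textbf{Step 2: derive a differential inequality for $f_t$.} The two ingredients are as follows.
\begin{enumerate}[label=(\roman*)]
\item On the domain, the ${\rm RCD}(K,N)$ condition gives Laplacian comparison for the squared distance. It also gives the Kuwada-type transport of test functions along optimal couplings used in \cite{Gig23,MS22}.
\item On the target, the ${\rm CAT}(1)$ comparison for quadrilaterals (Reshetnyak-type inequalities) applies because both $u(x)$ and $u(y)$ lie in $B_\rho(o)$ with $\rho<\pi/2$. In this region $\cos(\d_\Y(o,\cdot))$ is bounded below and $\d_\Y(o,\cdot)$ is strictly convex.
\end{enumerate}
Comparing the energy of $u$ with that of competitors obtained by geodesic interpolation towards $o$, and between $u(x)$ and $u(y)$, I aim to prove, in the distributional sense on $B_R(x_0)$,
\begin{equation}\label{eq:Serb_ineq_plan}
\frac{g_t}{\cos\phi}\,{\rm div}\Big(\cos^2\phi\,\nabla\frac{g_t}{\cos\phi}\Big)\geq -C\,g_t^2 - o_t(1)\,|\de u|_{\sf HS}^2,
\qquad g_t:=\sqrt{f_t/t},\quad \phi:=\d_\Y(u,o).
\end{equation}
This is the nonsmooth analogue of \eqref{eq:Serb_PDE}. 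The weight $\cos\phi$ absorbs the positive-curvature term $\kappa|\de u|^4$, which otherwise could not be handled. Making sense of the divergence requires $\phi\in W^{1,2}$, which holds since $\d_\Y(o,\cdot)$ is $1$-Lipschitz. It also requires the weight $\cos^2\phi$ to be bounded between positive constants, which holds because $\rho<\pi/2$.

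\textbf{Step 3: Moser iteration.} Since $\cos^2\phi\in[\cos^2\rho,1]$, inequality \eqref{eq:Serb_ineq_plan} states that $w_t:=g_t/\cos\phi$ is a weak subsolution of a uniformly elliptic equation in divergence form, with a lower-order term $Cw_t^2$. The error term is controlled by the higher integrability of $|\de u|_{\sf HS}$ from Theorem \ref{thm:higher_integrability}: it lies in $L^p_{\rm loc}$ for some $p>1$. The ${\rm RCD}(K,N)$ space is locally doubling and supports a Poincaré inequality, so De Giorgi--Nash--Moser iteration applies. It yields
\begin{equation*}
\sup_{B_{R/2}} w_t\le C\Big(\fint_{B_R} w_t^2+1\Big)^{1/2},
\end{equation*}
and the right-hand side is bounded uniformly in $t$ by the energy of $u$. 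Letting $t\to0$ gives ${\rm lip}(u)\in L^\infty(B_{R/2})$.

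\textbf{Main obstacle.} The hardest part is Step 2. Because $f_t$ is only semiconcave-type and $u$ is only Hölder, the inequality has to be proved with a further regularization of $f_t$ (for instance, a second inf/sup-convolution or a heat-flow mollification). Each curvature term must then be shown to survive in the limit. In particular, the term $\kappa\,(\cdot)^4$ must be shown to be exactly compensated by the $\cos\phi$ weight, not just bounded, since it is not a priori in $L^1_{\rm loc}$. I would first try to carry out the comparison with the weight built into the competitor, i.e.\ interpolating via $\cos\phi$ as in Serbinowski. I would then pass to the limit using Theorem \ref{thm:higher_integrability} to dominate the error terms.
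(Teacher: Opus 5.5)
Your overall architecture matches the paper's: Hopf--Lax regularization, a Serbinowski-type weighted inequality, Moser iteration, then reading off the Lipschitz bound as in Step 1. Step 1 itself is fine. But Step 2, which carries all the analytic content, is only stated as an aim. Worse, the inequality you target could not be used in Step 3 even if it were proved.

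\emph{Exact compensation is not available.} The bound one can actually prove for a Hopf--Lax regularization, Proposition \ref{prop:good_distributional_bound}, has the form $\Delta\gamma_t\geq pKa_t-c_t(a_t+\gamma_t)|\de u|_{\rm HS}^2$. Here $c_t=1+o_t(1)$, and $a_t$ is controlled by $\gamma_t$ only after time-averaging, and with a loss. Compare this with the coefficient $1$ in $\Delta\cos\phi\leq-\cos\phi\,|\de u|_{\rm HS}^2$ (Lemma \ref{lem:key_lemma}). The mismatch leaves a potential term of the type $\epsilon_t\,g_t^2|\de u|_{\rm HS}^2$, proportional to the unknown. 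It is not the additive $o_t(1)|\de u|_{\rm HS}^2$ you wrote.

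\emph{The square root costs a gradient term.} Passing to $g_t=\sqrt{f_t/t}$ needs a Kato-type term, since $g_t\Delta g_t=\Delta(g_t^2/2)-|\nabla g_t|^2$. The Hopf--Lax inequalities contain no $|\nabla g_t|^2$ on the good side.

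\emph{Step 3 cannot absorb such terms.} Higher integrability only gives $|\de u|_{\rm HS}^2\in L^{1+\varepsilon/2}_{\rm loc}$. De Giorgi--Nash--Moser needs source or potential terms in $L^s$ with $s>N/2$ to produce an $L^\infty$ bound, and a small coefficient in front does not change this.

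The missing idea is to create strict slack, so that every $|\de u|_{\rm HS}^2$ term is removed by sign rather than by exact cancellation with a global weight.
\begin{itemize}
\item The paper takes $p>3$ in the Hopf--Lax scheme. Then $\gamma_t$ behaves like ${\rm lip}^{p/(p-1)}u$, a power close to $1$, so no square root is needed.
\item It time-averages, setting $G_T=T\int_T^\delta\gamma_t t^{-2}\,dt$ and $\overline{G}_T=G_T+\frac{T}{2\delta}M_\delta$. This gives $A_T\leq\frac{2}{p-1}\overline{G}_T$, hence $\Delta\overline{G}_T\geq-\mu\overline{G}_T-\lambda|\de u|_{\rm HS}^2\overline{G}_T$ with $\lambda=(1+\eta)(1+\frac{2}{p-1})<2$.
\item H\"older continuity (Theorem \ref{thm:Holder_reg}) localizes the image to $B_s(o_B)$, with $o_B=u(\hat x)$ and $s<\pi/8$.
\item The shifted weight $\psi=\cos\d_{\Y}(u,o_B)-\tfrac12\in[\tfrac14,\tfrac12]$ satisfies $\cos\d_{\Y}(u,o_B)\geq2\psi$. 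So Lemma \ref{lem:key_lemma} gives $\Delta\psi\leq-2|\de u|_{\rm HS}^2\psi\leq-\lambda|\de u|_{\rm HS}^2\psi$; this is where the slack $2-\lambda>0$ is spent.
\item Test the inequality for $\overline{G}_T$ against $\psi\varphi$ and the one for $\psi$ against $\overline{G}_T\varphi$, then subtract. The $|\de u|_{\rm HS}^2$ terms cancel, leaving $\int_B\psi^2\langle DZ_T,D\varphi\rangle\de\m\leq\mu\int_B\psi^2Z_T\varphi\de\m$ for $Z_T=\overline{G}_T/\psi$. Weighted Moser applies to this directly.
\end{itemize}

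Higher integrability is used only for the starting point, $\sup_t\|\gamma_t\|_{L^2(B)}<\infty$. This goes through the pointwise estimate $\d_{\Y}(u(x),u(y))\leq C\d_{\X}(x,y)\bigl(\mathcal{M}(|\de u|_{\rm HS}^2)(x)\bigr)^{1/2}$ and the maximal inequality. Your claim that $\fint w_t^2$ is bounded by the energy of $u$ needs this argument too, since the maximal function of an $L^1$ function need not be integrable.
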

Once this Theorem is established one can finally and rigorously write a variant of the Bochner-Eells-Sampson formula.
\begin{theorem}
\label{thm:BES_ineq}
Let $(\X,\d_\X,\m)$ be an ${\rm RCD}(K,N)$ space, and let $(\Y,\d_\Y)$ be a ${\rm CAT}(1)$ space. Let
\begin{equation*}
u:\Omega\subset X\to Y
\end{equation*}
be a harmonic map with $\Omega$ open and $\m(\X\setminus\Omega)>0$ such that
\begin{equation*}
u(\Omega)\subset B_\rho(o),
\qquad
\rho<\frac{\pi}{2}.
\end{equation*}
Then the inequality  
\begin{equation}
    \label{eq:Bochner_inequality}
    \Delta\bigg(\frac{{\rm lip}^2u}{2}\bigg)\geq|\nabla{\rm lip}u|^2+K{\rm lip}^2(u)-\frac{|\de u|_{\rm HS}^2{\rm lip}^2u}{n+2}
\end{equation}
holds in the weak sense in $\Omega$, where $n$ is the essential dimension of $(\X,\d,\m)$.
\end{theorem}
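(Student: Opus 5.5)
The plan is to deduce Theorem \ref{thm:BES_ineq} from Theorem \ref{thm:Lipschitz_continuity}, combined with the Bochner–Eells–Sampson machinery developed in \cite{Gig23} and \cite{MS22}. Those works need local Lipschitz regularity of $u$ as an a priori input, and Theorem \ref{thm:Lipschitz_continuity} now supplies it. The statement is local, so $\Omega$ need not be bounded. I would fix an open set $\Omega'\Subset\Omega$ with $\Omega'$ bounded and apply Theorem \ref{thm:Lipschitz_continuity} on a bounded neighbourhood of $\overline{\Omega'}$. This gives $L:=\sup_{\Omega'}{\rm lip}\,u<\infty$. In particular ${\rm lip}\,u\in L^\infty(\Omega')$ and $|\de u|_{\rm HS}^2\le n\,L^2$, so the curvature term $|\de u|^2_{\rm HS}{\rm lip}^2u$ lies in $L^\infty_{\rm loc}$. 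The weak formulation of \eqref{eq:Bochner_inequality} against nonnegative test functions in ${\rm Lip}_c(\Omega)$ then makes sense.

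Next I would rerun the Hopf–Lax argument of \cite{ZZ18,Gig23,MS22} on $\Omega'$. For small $t>0$ consider
\[
f_t(x):=\inf_{y\in\Omega'}\Big\{\frac{\d_\Y^2(u(x),u(y))}{2t}-\d_\X(x,y)\Big\}\cdot(\text{suitable normalisation}),
\]
or more precisely the pointwise quantity $\sup_y\{\d_\Y(u(x),u(y))-\d^2_\X(x,y)/2t\}$ used in \cite{Gig23}. The key is the first-variation comparison between $u$ at $x$ and at a maximizing point $y_x$. In the ${\rm CAT}(0)$ case this comparison used convexity of $\d_\Y(\cdot,\cdot)$ along geodesics. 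In ${\rm CAT}(1)$ and inside $B_\rho(o)$ with $\rho<\pi/2$, I would instead use the comparison with $\mathbb S^2$. For a pair of points $u(x),u(y)$ at small distance $\ell$, the spherical law of cosines gives a defect of the form $-\tfrac{1}{2}\ell^2\cdot(\text{energy density})$ relative to the flat computation, and the factor $\tfrac{1}{n+2}$ arises from averaging over directions. Concretely, averaging $|\de u(v)|^2$ over the unit sphere in an $n$-dimensional tangent space gives $\tfrac{1}{n}|\de u|_{\rm HS}^2$. The fourth-order moment correction in the sine comparison gives the constant $\tfrac{1}{n+2}$, via $\fint_{\mathbb S^{n-1}}\langle v,e\rangle^4=\tfrac{3}{n(n+2)}$. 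Since ${\rm lip}\,u$ is now bounded, all these error terms are controlled uniformly. This yields a weak inequality for the regularised functions, with $K$ coming from the ${\rm RCD}(K,N)$ Laplacian comparison for $\d_\X^2/2$ exactly as in \cite{Gig23}.

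Finally I would let $t\downarrow0$. The regularised quantities converge to ${\rm lip}\,u$ pointwise and in $W^{1,2}_{\rm loc}$-weak, with the gradient term obtained by lower semicontinuity. The linear terms pass to the limit by dominated convergence, which is possible thanks to the $L^\infty$ bound on ${\rm lip}\,u$. The identification of ${\rm lip}\,u$ with the limit and the lower semicontinuity of $|\nabla\cdot|^2$ are standard from \cite{Gig23,MS22}.

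The main obstacle is the curvature error. One must show that the ${\rm CAT}(1)$ correction to the quadrilateral/first-variation inequality is bounded exactly by $\tfrac{1}{n+2}|\de u|^2_{\rm HS}{\rm lip}^2u$ in the limit, rather than by a cruder multiple. My first attempt would follow the proof of \cite[Theorem 7.1]{MS22}, inserting the spherical comparison terms. I would use the a.e. infinitesimal-Hilbertian structure (essential dimension $n$) to compute the directional averages against the pointwise energy density.
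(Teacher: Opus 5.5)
Your architecture matches the paper's: Theorem \ref{thm:Lipschitz_continuity}, applied on bounded subdomains, gives local Lipschitz continuity, and what remains is a Bochner-type estimate that needs Lipschitz continuity as input. The paper does not rerun anything for this second step. It simply invokes \cite[Theorem 3.26]{GGZZ26}, where the ${\rm CAT}(1)$ estimate was proved under the extra assumption that $u$ is locally Lipschitz.

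You instead rebuild that conditional result from the ${\rm CAT}(0)$ papers \cite{Gig23,MS22}. Incidentally, those papers do not need Lipschitz regularity as an input; they prove it. The one step that is genuinely new in positive curvature, control of the curvature term, is left open as your ``main obstacle''. In this paper's framework that step needs no fresh spherical comparison. The ${\rm CAT}(1)$ Hopf--Lax inequality is already Proposition \ref{prop:good_distributional_bound}, $\Delta\gamma_t\ge pKa_t-c_t(a_t+\gamma_t)|\de u|^2_{\rm HS}$ with $c_t\to1$, and Section \ref{sec:main_res} uses it before Lipschitz continuity is known. What Lipschitz continuity buys is that $\gamma_t$ and $a_t$ are locally bounded uniformly in $t$ and converge to ${\rm lip}^q u/q$ and ${\rm lip}^q u/p$, with $q$ the conjugate exponent. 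That is what lets you pass to the limit. Since this inequality has no gradient term, $|\nabla{\rm lip}\,u|^2$ is not a lower-semicontinuity by-product. It has to come from the chain rule, e.g. by letting $q\downarrow1$.

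The more serious problem is the target of your main step. Your fourth-moment heuristic for $\frac{1}{n+2}$ has no basis. By the representation formula $e_2[u]=(n+2)^{-1/2}|\de u|_{\rm HS}$, the last term in \eqref{eq:Bochner_inequality} is just $e_2^2[u]\,{\rm lip}^2u$; the $n+2$ comes from second moments over balls. More importantly, no argument can produce this coefficient, because the inequality with it is false. Take the inverse stereographic projection $\sigma\colon\R^2\to\mathbb S^2$ on a small disc around $0$. It is a minimizing harmonic map into a small ball, with $K=0$, $n=2$, ${\rm lip}\,\sigma=\lambda:=\frac{2}{1+|x|^2}$ and $|\de\sigma|^2_{\rm HS}=2\lambda^2$. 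At $x=0$ the left-hand side of \eqref{eq:Bochner_inequality} is $\Delta\frac{\lambda^2}{2}(0)=-16$, while the right-hand side is $0+0-\frac{8\cdot4}{4}=-8$. By smoothness the weak inequality fails near $0$.

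Passing to the limit in Proposition \ref{prop:good_distributional_bound}, where $c_t\to1$ and $a_t+\gamma_t\to{\rm lip}^qu$, instead gives coefficient $1$. The resulting term is $-|\de u|^2_{\rm HS}\,{\rm lip}^2u=-(n+2)e_2^2[u]\,{\rm lip}^2u$. This agrees with the smooth Bochner--Eells--Sampson formula and with the example above ($-16\ge-32$). So the constant as printed appears to be a misprint. A correct proof should aim for coefficient $1$; sharpening the comparison down to $\frac{1}{n+2}$ cannot succeed.
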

We are not going to prove the latter Theorem as it was already proved in \cite[Theorem 3.26]{GGZZ26}, \emph{assuming} the local Lipschitz continuity of $u$, which was not available at the time. Finally we refer to \cite[Theorem 3.27]{GGZZ26} for the sharp Lipschitz bound on $u$ and to \cite[Corollary 3.28]{GGZZ26} for what concerns an $L^\infty$-Liouville result for $u:\X\to\Y$ which is globally defined on $\X$, which is an ${\rm RCD}(0,N)$ space, with $\Y$ which is a ${\rm CAT}(\kappa)$ space and $u(\X)\subset B_\rho(o)\subset\Y$, with $\rho<\frac{\pi}{2\sqrt{\kappa}}$: all the previous results are now rigorous, since the local Lipschitz continuity has been established.
\newline
The paper is organized as follows: in Section \ref{sec:preliminaries} we are going to introduce the main tools and notation for what concerns ${\rm RCD}$ spaces, ${\rm CAT}$ spaces and the notion of minimizing harmonic map, along with several useful and known results concerning the topic. In Section \ref{sec:main_res} instead we are going to prove state and prove the main results building on top of which we are going to establish Theorem \ref{thm:Lipschitz_continuity}.

\section{Preliminaries}
\label{sec:preliminaries}
In this section $(\X,\dist_\X,\m)$ will be denoting an ${\rm RCD}(K,N)$ space with $N<+\infty$ and $(\Y,\dist_\Y)$ will be a ${\rm CAT}(1)$ space.  We stress again that since the statements for ${\rm CAT}(\kappa)$ follow by scaling, we shall stick to the setting of ${\rm CAT}(1)$ here and in the following. Moreover we assume the reader to be familiar with the basic theory of ${\rm RCD}$ and ${\rm CAT}$ spaces, as we shall only recall the fundamental tools needed for our purposes.
We shall introduce the theory developed in \cite{GT20} (after the seminal work \cite{KS93}) and define the Korevaar-Schoen energy, in order to define minimizing harmonic maps.

Let $u\in L^2(\Omega,\Y)$ with $\Omega\subseteq\X$ open set. We call the $2$-energy density of $u$ at scale $r$ inside $\Omega$ the quantity $ \mathbf{ks}_{2,r}[u,\Omega]:\X\to\R_+$, defined as 
\begin{equation}
    \label{p-energy density}
    \mathbf{ks}_{2,r}[u,\Omega](x):=
    \begin{cases}
    \bigg(\fint_{B_r(x)}\frac{\dist_\Y^2(u(x),u(y))}{r^2}\de\m(y)\bigg)^{\frac{1}{2}}\quad&{\rm if}\;B_r(x)\subset\Omega\\
    0&{\rm otherwise}.
    \end{cases}
\end{equation}
Moreover we introduce the \emph{total energy} of $u$ in $\Omega$ as \begin{equation}
\label{KS energy}
    {\rm E}_2[u,\Omega]:=\liminf_{r\to 0}\int_{\Omega}\mathbf{ks}_{2,r}[u,\Omega]^2(x)\de\m(x).
\end{equation}
We can now define Sobolev spaces as follows
\begin{definition}[Korevaar-Schoen space and harmonic maps]
    We say that a function $u\in L^2(\Omega,\Y)$ is in ${\rm KS}^{1,2}(\Omega,\Y)$ if ${\rm E}_2[u,\Omega]<+\infty$. Moreover, given $w\in {\rm KS}^{1,2}(\Omega,\Y)$, we say that $u$ is \emph{harmonic} in $\Omega$ with boundary datum $w$, if $u\in\argmin_{v\in {\rm KS}^{1,2}_{w}(\Omega,\Y)} {\rm E}_{2}[v,\Omega]$, where
    \begin{equation*}
        {\rm KS}^{1,2}_{w}(\Omega,\Y):=\bigg\{v\in  {\rm KS}^{1,2}(\Omega,\Y):\quad \dist_\Y(v,w)\in W^{1,2}_0(\Omega)\bigg\}.
    \end{equation*}
\end{definition}
The existence theory for minimizers of ${\rm E}_2[\cdot,\Omega]$ has been carried out in \cite{Sak23} (see Theorem 1.2 therein) under the condition that the boundary datum has image contained in a sufficiently small ball of the target space.

The next result, which can be found in \cite[Theorem 3.13]{GT20}, provides a representation formula for the Korevaar-Schoen energy density.
\begin{theorem}
    Let $(\X,\dist_\X,\m)$ be an ${\rm RCD}(K,N)$ space and $(\Y,\dist_\Y)$ a complete metric space. Then for every $u\in{\rm KS}^{1,2}(\X,\Y)$ there exists a function $e_2[u]\in L^2(\X)$, called \emph{energy density} of $u$, such that 
    \begin{equation*}
        \mathbf{ks}_{2,r}[u]\to e_2[u]\quad\m-{\rm a.e.\;\;and\;in}\;L^2\;{\rm as}\;r\to 0.
    \end{equation*}
    In particular the $\liminf$ in \eqref{KS energy} is actually a limit.
\end{theorem}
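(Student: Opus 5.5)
The plan is to reduce the statement to a pointwise blow-up analysis at $\m$-a.e. point, and then upgrade a.e. convergence to $L^2$ convergence through uniform integrability.

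\textbf{Step 1: scalar reduction.} Fix a countable dense set $\{y_i\}_i\subset u(\X)$, which we may take separable up to null sets. Set $f_i:=\dist_\Y(\cdot,y_i)\circ u$. Each $f_i$ lies in $W^{1,2}$, with minimal weak upper gradient bounded by a common function $G\in L^2$, the minimal upper gradient of $u$. Moreover, for $\m$-a.e. pair of points
\begin{equation*}
\dist_\Y(u(y),u(z))=\sup_i |f_i(y)-f_i(z)|.
\end{equation*}
Next I use a Lusin-type approximation. For every $\epsilon>0$ there is a closed set $C_\epsilon$ with $\m(\X\setminus C_\epsilon)<\epsilon$ such that $u|_{C_\epsilon}$ is Lipschitz. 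This follows from the pointwise inequality $\dist_\Y(u(y),u(z))\le C\dist_\X(y,z)\big(MG^2(y)^{1/2}+MG^2(z)^{1/2}\big)$, which comes from the doubling property and the $(1,1)$-Poincaré inequality. On $C_\epsilon$ one can then work with Lipschitz data.

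\textbf{Step 2: blow-up and identification of $e_2[u]$.} At $\m$-a.e. $x$ three things hold. The tangent cone is $\R^n$, with $n$ the essential dimension (Mondino--Naber, Bru\`e--Semola). The point $x$ is a density point of $C_\epsilon$. Each $f_i$ is first-order differentiable at $x$ in the sense of Cheeger, i.e.\ along the blow-up the rescalings $(f_i(\exp_x(r\,\cdot))-f_i(x))/r$ converge to a linear map $\d f_i(x)$.

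I then define a seminorm on $\R^n$ by
\begin{equation*}
{\rm md}_x(v):=\sup_i|\d f_i(x)(v)|.
\end{equation*}
The claim is that the rescaled maps satisfy $\dist_\Y(u(x),u(y))/r\to {\rm md}_x(\cdot)$ in $L^2$ of the rescaled unit ball. This is a Kirchheim-type metric differentiability statement. It follows because on $C_\epsilon$ the rescaled maps are equi-Lipschitz, so the supremum over $i$ can be exchanged with the limit via Arzel\`a--Ascoli. The density-point property controls the complement of $C_\epsilon$ in $L^2$ thanks to the maximal function bound. Consequently
\begin{equation*}
\mathbf{ks}_{2,r}[u]^2(x)\to\fint_{B_1(0)}{\rm md}_x(v)^2\de v=:e_2[u]^2(x),
\end{equation*}
which proves the a.e.\ convergence.

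\textbf{Step 3: $L^2$ convergence.} I will show equi-integrability of $\{\mathbf{ks}_{2,r}[u]^2\}_r$ by a localized estimate. For every Borel set $A$,
\begin{equation*}
\int_A\mathbf{ks}_{2,r}[u]^2\de\m\le C\int_{A^{r}}G^2\de\m,
\end{equation*}
where $A^r$ is the $r$-neighbourhood of $A$. This is obtained by Bishop--Gromov and the segment inequality: integrate $\dist_\Y(u(x),u(y))^2\le \dist^2_\X(x,y)\int_0^1G^2(\gamma_t)\de t$ along geodesics, then use Fubini and Cavalieri. Vitali's theorem then gives $\mathbf{ks}_{2,r}^2\to e_2^2$ in $L^1$, hence $\mathbf{ks}_{2,r}\to e_2$ in $L^2$. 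In particular the $\liminf$ in \eqref{KS energy} is a limit.

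\textbf{Main obstacle.} The hard step is Step 2. The difficulty is the exchange of the supremum over countably many $f_i$ with the blow-up limit, together with the fact that the limit is an honest seminorm independent of the chosen subsequence of scales. I would handle both with the Lipschitz pieces $C_\epsilon$ and uniqueness of tangents at a.e.\ point, letting $\epsilon\to0$ at the end.
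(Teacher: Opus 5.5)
Your proposal has a genuine gap in Step~2, at the point you yourself flag as the main obstacle, and the argument you sketch there cannot close it. Equi-Lipschitz bounds on $C_\epsilon$ plus Arzel\`a--Ascoli give subsequential limits $g$ of $\dist_\Y(u(x),u(\cdot))/r$ and $\ell_i$ of the rescaled $f_i$. From $\dist_\Y(u(x),u(y))\ge|f_i(y)-f_i(x)|$ you get $g\ge\sup_i|\ell_i|$. The reverse inequality does not follow: the index almost realizing $\sup_i|f_i(y)-f_i(x)|$ can drift to infinity as $r\to0$, and the blow-ups of the $f_i$ need not converge uniformly in $i$.

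In fact the a.e.\ properties you list are not enough. Take $\X=\R$, $\Y=\ell^\infty$, and $u(s)=\sum_{k\ge1}\phi_k(s)e_k$ with $\phi_k(s)=\dist(s,\R\setminus(2^{-k},2^{1-k}))$. This is a $1$-Lipschitz map vanishing off $[0,1]$. Take a dense family $y_i=c_ie_{k_i}$, $c_i>0$, in $u(\R)$.
\begin{itemize}
\item Since $\|u(s)\|_\infty\le|s|/3$ and the $\phi_k$ have disjoint supports, every $f_i=\|u(\cdot)-y_i\|_\infty$ is constant near $0$. So all differentials vanish at $0$.
\item The point $0$ is also a density point of $C_\epsilon=\R$, with tangent $\R$.
\item Yet along $r_k=2^{-k}$ the blow-ups $v\mapsto\|u(r_kv)\|_\infty/r_k$ eventually coincide on compact sets with $v\mapsto\dist(v,\{2^j\}_{j\in\mathbb Z})$ for $v>0$ (and $0$ for $v\le0$).
\end{itemize}
This limit is not a seminorm, and it is not $\sup_i|\d f_i(0)|=0$. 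The missing input is genuinely measure-theoretic: it is Kirchheim's metric differentiability theorem. Its proof rests on the one-dimensional identity $|\dot\gamma|=\sup_i|(f_i\circ\gamma)'|$ a.e., obtained from the fundamental theorem of calculus along segments and Lebesgue points of $\sup_i|(f_i\circ\gamma)'|$, not from compactness. Uniqueness of tangents does not help (it holds in the example), and it does not make ${\rm md}_x$ well defined. The identification of the tangent with $\R^n$ via GH approximations is fixed only up to isometries and may rotate with the sequence of scales.

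The paper gives no proof and simply quotes \cite[Theorem 3.13]{GT20}. The argument there, and the natural repair of your Step~2, goes in essence through $\R^n$ rather than through tangent cones:
\begin{itemize}
\item By strong rectifiability of ${\rm RCD}(K,N)$ spaces, a.e.\ point lies in a Borel set $A$ carrying a $(1+\epsilon)$-biLipschitz chart $\varphi\colon A\to\R^n$ with $\varphi_*(\m|_A)\ll\mathcal L^n$.
\item Kirchheim's theorem, applied to an $\ell^\infty$-valued Lipschitz extension of $u\circ\varphi^{-1}$ on a Lipschitz piece, gives a seminorm ${\rm md}_x$ with $\dist_\Y(u(x),u(y))={\rm md}_x(\varphi(y)-\varphi(x))+o(r)$, uniformly for $y$ in the piece within $B_r(x)$.
\item Hence, at suitable density and Lebesgue points, $\limsup_r\mathbf{ks}_{2,r}[u](x)\le(1+C\epsilon)\liminf_r\mathbf{ks}_{2,r}[u](x)$. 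Letting $\epsilon\downarrow0$ gives a.e.\ convergence, with a fixed chart replacing your GH identifications.
\end{itemize}

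Two smaller points. First, in Step~3 the bound $\int_A\mathbf{ks}_{2,r}^2\le C\int_{A^r}G^2$ does not give equi-integrability, because small $\m(A)$ does not make $\m(A^r)$ small: an open dense $A$ of small measure has $A^r=\X$. If you keep track of the total mass of the contracted measures in your segment-inequality computation, you get instead $\sup_{r\le1}\int_A\mathbf{ks}_{2,r}^2\de\m\le C\int_{\{G^2>k\}}G^2\de\m+Ck\,\m(A)$ for every $k>0$. Together with tightness, this is what Vitali needs. Second, controlling $\X\setminus C_\epsilon$ through $M(G^2)$ is problematic, since $M(G^2)$ need not be locally integrable. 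Use the $(1,1)$-Poincar\'e pointwise bound with $(MG)^2\in L^1$ instead.
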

The following is instead an an equivalent way to speak about $e_2[u]$ in terms, based on the Hilbert-Schmidt norm of the differential $|\de u|_{\rm HS}$: we are not going to discuss the meaning of the object $\de u$, referring to \cite{GPS18} for the details. What follows is \cite[Proposition 6.7]{GT20}.
\begin{theorem}
    Let $(\X,\dist_\X,\m)$ be an ${\rm RCD}(K,N)$ space and $\Omega\subset\X$ an open set. Let $(\Y,\dist_\Y)$ be a ${\rm CAT}(\kappa)$ space and $u\in{\rm KS}^{1,2}(\Omega,\Y)$, then for its energy density we have the following representation formula
    \begin{equation}
        \label{HS norm and energy density}
        e_2[u]=(d+2)^{-\frac{1}{2}}|\de u|_{\rm HS}.
    \end{equation}
\end{theorem}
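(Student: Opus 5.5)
The plan is to prove the identity pointwise at $\m$-a.e.\ $x\in\Omega$. The earlier Theorem gives $e_2[u](x)=\lim_{r\to0}\mathbf{ks}_{2,r}[u,\Omega](x)$ a.e., so it suffices to compute this limit by a blow-up and to recognise it as $(d+2)^{-1/2}|\de u|_{\rm HS}(x)$. The Euclidean identity behind the constant is this: for a linear map $A:\R^d\to H$ into a Hilbert space,
$$\fint_{B_1(0)}|Av|^2\de v=\frac{1}{d+2}\sum_{i=1}^d|Ae_i|^2=\frac{|A|^2_{\rm HS}}{d+2},$$
which follows from $\fint_{B_1}v_iv_j\de v=\delta_{ij}/(d+2)$. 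Thus the whole point is to show that, infinitesimally at a.e.\ $x$, the domain looks like $\R^d$ with Lebesgue measure, and $u$ looks like a linear map into a Hilbert space whose Hilbert–Schmidt norm is $|\de u|_{\rm HS}(x)$.

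\textbf{Step 1: domain blow-up.} By the structure theory of ${\rm RCD}(K,N)$ spaces, for $\m$-a.e.\ $x$ the rescalings $(\X,r^{-1}\d_\X,\m(B_r(x))^{-1}\m,x)$ converge in the pmGH sense to $(\R^d,|\cdot|,c_d\mathcal L^d,0)$, where $d$ is the essential dimension.

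\textbf{Step 2: metric differentiability of $u$.} Since $u\in{\rm KS}^{1,2}$, there is an upper gradient–type function $g\in L^2$ satisfying a Hajłasz-type bound $\d_\Y(u(y),u(z))\le C\,\d_\X(y,z)\,(Mg(y)+Mg(z))$. Hence $u$ is Lipschitz on sets $E_\lambda=\{Mg\le\lambda\}$ whose complements have arbitrarily small measure. At a density point $x$ of $E_\lambda$, a Kirchheim-type argument along the blow-up of Step 1 shows that $\d_\Y(u(x),u(y))/r$, with $y$ corresponding to $v\in B_1(0)\subset\R^d$, converges to a seminorm ${\rm md}_x(v)$.

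\textbf{Step 3: the seminorm is Hilbertian.} I use the ${\rm CAT}(1)$ hypothesis to prove that ${\rm md}_x$ comes from a quadratic form, i.e.\ that it satisfies the parallelogram law. Blowing up the target, the tangent cone at $u(x)$ is ${\rm CAT}(0)$. Geodesic midpoint comparison then yields ${\rm md}_x(v+w)^2+{\rm md}_x(v-w)^2\le 2{\rm md}_x(v)^2+2{\rm md}_x(w)^2$. The reverse inequality follows from the same inequality applied to $v\pm w$, and the Jordan–von Neumann theorem concludes. This is essentially the Korevaar–Schoen argument that directional energies define a pull-back inner product. I then identify $\sum_i{\rm md}_x(e_i)^2$ for an orthonormal frame with $|\de u|_{\rm HS}^2(x)$. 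This uses the construction of $\de u$ in \cite{GPS18}, whose pointwise norms along test vector fields coincide a.e.\ with the metric differential.

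\textbf{Step 4: passing to the limit (main obstacle).} It remains to show
$$\fint_{B_r(x)}\frac{\d_\Y^2(u(x),u(y))}{r^2}\de\m(y)\to\fint_{B_1}{\rm md}_x(v)^2\de v.$$
On $B_r(x)\cap E_\lambda$ the integrand is uniformly bounded, so pmGH convergence together with Step 2 gives convergence. On $B_r(x)\setminus E_\lambda$ I bound the integrand by $C(Mg(x)+Mg(y))^2$. I choose $x$ to be a Lebesgue point of $(Mg)^2\chi_{\X\setminus E_\lambda}$, so that this contribution is $\lesssim \int_{\X\setminus E_\lambda}(Mg)^2$. That quantity tends to $0$ as $\lambda\to\infty$, uniformly in $r$ small, by the $L^2$-boundedness of $M$. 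Letting first $r\to0$ and then $\lambda\to\infty$, along a countable sequence of $\lambda$'s to preserve full measure, gives $e_2[u]^2(x)=|\de u|^2_{\rm HS}(x)/(d+2)$ a.e.

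I expect the delicate points to be this uniform control of the bad set together with the identification in Step 3 of the abstract $|\de u|_{\rm HS}$ with the blow-up metric differential.
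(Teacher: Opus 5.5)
The paper does not prove this statement. It is quoted from \cite[Proposition 6.7]{GT20}, so there is no in-paper argument to measure yours against, and I assess it on its own. Your skeleton is the natural one: blow-up at a.e.\ point, a metric differential on the Lipschitz pieces $E_\lambda$, Hilbertian structure from ${\rm CAT}$ comparison, then $\fint_{B_1}v_iv_j\,\de v=\delta_{ij}/(d+2)$. The parallelogram step is fine, since either inequality for all pairs gives the other by your substitution.

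\textbf{Main gap: Step 2.} A pmGH blow-up gives, for each $r$ separately, an approximate isometry of $r^{-1}B_r(x)$ onto $B_1(0)$. It is determined only up to an orthogonal map and is unrelated across scales. So there is no fixed Euclidean set on which Kirchheim's theorem applies, and without a coherent notion of ``$y$ corresponding to $v$'' the limit ${\rm md}_x(v)$ is not even well defined. What you need is a chart: a $(1+\varepsilon)$-bi-Lipschitz $\phi:A\to\R^d$ with $\phi_*(\m|_A)\ll\mathcal L^d$, with $x$ a density point of $A\cap E_\lambda$ and a Lebesgue point of the density. Applying Kirchheim to $u\circ\phi^{-1}$ on $\phi(A\cap E_\lambda)$ gives two-point metric differentiability: $\d_\Y(u(y),u(z))={\rm md}_x(\phi(y)-\phi(z))+o(r)$ for $y,z\in A\cap E_\lambda\cap B_r(x)$.

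This two-point version is what the midpoint comparison in Step 3 actually uses; your radial statement is not enough. You also need that in a ${\rm CAT}(1)$ space an $o(r)$-approximate midpoint is $o(r)$-close to the true midpoint. Moreover, the chart identifies $e_2[u]^2(x)$ with $\fint_{B_1}{\rm md}_x^2$ only up to an error $O(\varepsilon\lambda^2)$. You therefore need such charts for every $\varepsilon$ and a limit $\varepsilon\to0$.

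The identification $\sum_i{\rm md}_x(e_i)^2=|\de u|^2_{\rm HS}(x)$ must be done in the same chart. This means matching a local orthonormal frame of the tangent module with a basis of $\R^d$ that is orthonormal up to $O(\varepsilon)$, and transporting the relation $|\de u(\cdot)|={\rm md}_x$ from \cite{GPS18} through $\phi$. This `strong rectifiability' machinery is the technical core of \cite{GT20}, and your proposal skips it.

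\textbf{Step 4: an invalid inference with a simpler fix.} Let $h_\lambda:=(Mg)^2\chi_{\X\setminus E_\lambda}$. Being a Lebesgue point of $h_\lambda$ does not bound $\fint_{B_r(x)}h_\lambda$ by $\int_{\X\setminus E_\lambda}(Mg)^2$ uniformly in $r$; that would cost a factor $\m(B_r(x))^{-1}$. What it does give, since $x\in E_\lambda$ forces $h_\lambda(x)=0$, is $\fint_{B_r(x)}h_\lambda\to0$. For $y\in B_r(x)\setminus E_\lambda$ you have $\d_\Y(u(x),u(y))\leq Cr(\lambda+Mg(y))\leq 2Cr\,Mg(y)$. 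Hence the bad contribution tends to $0$ as $r\to0$ for each fixed $\lambda$. No limit $\lambda\to\infty$ at fixed $x$ is needed: the identity holds at a.e.\ point of each $E_\lambda$, and $\bigcup_{\lambda\in\N}E_\lambda$ has full measure because $Mg<\infty$ a.e.
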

\begin{definition}
Let $(\X,\dist_\X,\m)$ be a metric measure space and $\Omega\subset\X$ an open and bounded set. Let $\eta:\Omega\to\R$ be locally integrable. We say that a function $f\in W^{1,2}_{\rm loc}(\Omega)$ is such that $\Delta f\leq \eta$ weakly in $\Omega$ if for all $\varphi\in\Lip_c(\Omega)$ with $\varphi\geq 0$ we have 
    \begin{equation*}
        -\int_{\X}\nabla f\cdot\nabla\varphi\de\m_\X\leq\int_\X\varphi\eta\de\m_\X.
    \end{equation*}
\end{definition}
We now introduce some further notation trying to stick to the one of \cite{GGZZ26}. We let $U\Subset\Omega$ be open and consider $r>0$ and $\hat{x}\in U$ such that $B_{4r}(\hat{x})\subset U$ and $\|u\|_{C^\alpha(U)}(2r)^\alpha<s<\pi/8$. Finally call $B=B_r(\hat{x}),$ $2B:=B_{2r}(\hat{x})$, $\widehat{B}=B_{3r/2}(\hat{x})$ and $4B=B_{4r}(\hat{x})$. Let $o_B=u(\hat{x})$ we then have, thanks to the previous conditions, $u(2B)\subset B_s(o_B)$.

For $p>2$, define the function $F:\R\to\R$ as
\begin{equation*}
F(s)
:=
2\sin\left(\frac{s}{2}\right)
+
4\sin^2\left(\frac{s}{2}\right).
\end{equation*}
and $f:\X\times\X\to[-6,0]$ as
\begin{equation*}
    f(x,y)= 
    \begin{cases}
       -F(\dist_\Y(u(x),u(y)))\quad&{\rm if}\,x,y\in \widehat{B} \\
       -6\quad&{\rm otherwise}.
    \end{cases}
\end{equation*}
Then for all $t>0$ define the function $f_t:B\to\R$ as 
\begin{equation*}
f_t(x)
:=
\inf_{y\in\X}
\left[
\frac{\d_\X^p(y,x)}{p\,t^{p-1}}+f(x,y)
\right].
\end{equation*}
Now denoting with $y_{t,x}$ a minimizer for $f_t(x)$, and choosing $x$ as a competitor, we get
\begin{equation*}
    f_t(x)\leq \frac{\dist_\X^p(x,y_{t,x})}{pt^{p-1}}+f(x,y_{t,x})\leq 0.
\end{equation*}
This means $\dist_\X(x,y_{t,x})\leq 6p\frac{1}{p}t^\frac{p-1}{p}$. We now further shrink $t_*$ in a way that $(6p)^\frac{1}{p}t_*^{\frac{p-1}{p}}<{\rm dist}(\overline{B},\X\setminus \widehat{B})$ so that 
\begin{equation*}
      f_t(x):=\inf_{y\in \widehat{B}}\bigg[\frac{\dist_\X^p(x,y)}{pt^{p-1}}-F(\dist_\Y(u(x),u(y)))\bigg]\qquad\forall x\in B
\end{equation*}
Thanks to Theorem \ref{thm:Holder_reg} below we observe that $u$ is locally H\"older continuous inside $\Omega$ and thanks to \cite[Lemma 3.16]{GGZZ26} so is $f_t$.
Set
\begin{equation*}
\gamma_t(x):=-\frac{f_t(x)}{t},
\end{equation*}
\begin{equation*}
S_t(x)
:=
\left\{
y\in\X:
f_t(x)
=
\frac{\d_\X^p(y,x)}{p\,t^{p-1}}+
f(x,y)\right\},
\end{equation*}
and
\begin{equation*}
L_t(x)
:=
\min_{y\in S_t(x)}\d_\X(y,x),
\qquad
a_t(x)
:=
\frac{L_t^p(x)}{p\,t^p},
\qquad
\frac{D_t(x)}{t}
:=
\gamma_t(x)+a_t(x).
\end{equation*}
We now recall\cite[Theorem 3.15]{GGZZ26}, which proves higher integrability for gradients of minimizing harmonic maps.
\begin{theorem}
    \label{thm:higher_integrability}
    Let $\Omega, Y$ as above and $u:\Omega\to\Y$ be an harmonic map with $u(\Omega)\subset B_\rho(o)$ and $\rho<\pi?2$. Then there exists an $\varepsilon=\varepsilon(N,K,{\rm diam}(\Omega),\rho)>0$ such that $|\de u|_{\rm HS}\in L^{2+\epsilon}_{\rm loc}(\Omega)$ and  
    \begin{equation}\label{doubling-energy}
    \left(\fint_{B} |\de u|^{2+\varepsilon}_{\rm HS}\de\m\right)^{\frac{2}{2+\varepsilon}}\leqslant C_\varepsilon\fint_{2B}|\de u|_{\rm HS}^2\de\m
    \end{equation}
    for any ball $B$ with $2B\subset \Omega,$ where the constant $C_\epsilon>0$ depends only on $\epsilon$.
    \end{theorem}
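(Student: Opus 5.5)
The plan is the classical Gehring route: prove a reverse H\"older inequality for $|\de u|_{\rm HS}$ on balls, then upgrade integrability through Gehring's lemma in its version for doubling metric measure spaces. ${\rm RCD}(K,N)$ spaces are locally doubling and support a local $(1,1)$-Poincar\'e inequality, with constants depending on $N$, $K$ and the diameter scale; this is why $\varepsilon$ depends on $N$, $K$ and ${\rm diam}(\Omega)$.

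The key estimate is a Caccioppoli inequality. Fix a ball $B$ with $2B\subset\Omega$, a point $q\in\Y$, and a cutoff $\eta\in\Lip_c(2B)$ with $\eta=1$ on $B$ and $|\nabla\eta|\le C/r$. The target inequality is
\begin{equation*}
\int_B|\de u|^2_{\rm HS}\de\m\le \frac{C}{r^2}\int_{2B}\dist_\Y^2(u,q)\de\m,
\end{equation*}
with $C=C(N,K,\rho)$. To get it, I would build the competitor $v=$ the point at parameter $\eta\,\theta$ along the geodesic from $u(x)$ to $q$, where $\theta$ is a small fixed constant. Geodesics are unique because $u(\Omega)\subset B_\rho(o)$ with $\rho<\pi/2$. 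The energy of $v$ is then compared with that of $u$ using the ${\rm CAT}(1)$ quadrilateral comparison of Korevaar--Schoen type. Equivalently, one can test the subharmonicity-type inequality
\begin{equation*}
\Delta\big(1-\cos\dist_\Y(u,q)\big)\ge \cos\big(\dist_\Y(u,q)\big)\,|\de u|^2_{\rm HS}/c
\end{equation*}
(valid weakly in the RCD setting) against $\eta^2$. The bound $\rho<\pi/2$ keeps $\cos\dist_\Y(u,q)$ bounded below when $q$ is chosen in $B_\rho(o)$, say as a suitable ``center'' of $u(2B)$. This gives the inequality above with constants depending on $\rho$. I expect this step to be the main obstacle, since it requires the nonsmooth curvature comparison and careful constant tracking with respect to $\rho$.

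Next I would convert the right-hand side into a gradient term. If $q$ is not a point of $u(2B)$, I would work with the real-valued function $g=\dist_\Y(u,q)$, or with barycentric averages, and choose $q$ so that $\fint_{2B}\dist_\Y^2(u,q)$ is comparable to the variance-type quantity $\fint_{2B}\fint_{2B}\dist^2_\Y(u(x),u(y))$. The Poincar\'e inequality for ${\rm KS}^{1,2}$ maps into metric spaces, combined with the Sobolev--Poincar\'e improvement with exponent $2_*<2$ available on doubling spaces supporting a Poincar\'e inequality, then gives
\begin{equation*}
\frac{1}{r^2}\fint_{2B}\dist_\Y^2(u,q)\de\m\le C\bigg(\fint_{4B}|\de u|^{2_*}_{\rm HS}\de\m\bigg)^{2/2_*}.
\end{equation*}
Here I use that the minimal weak upper gradient of $u$ is controlled by $|\de u|_{\rm HS}$ through the representation of $e_2[u]$. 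Combining this with the Caccioppoli inequality yields the reverse H\"older inequality with increment, $\fint_B|\de u|^2_{\rm HS}\le C(\fint_{4B}|\de u|_{\rm HS}^{2_*})^{2/2_*}$.

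Finally I would apply the metric-space Gehring lemma to obtain $\varepsilon>0$ and \eqref{doubling-energy}. The enlargement $4B$ versus $2B$ can be adjusted by a covering argument, using doubling to pass between ball sizes.
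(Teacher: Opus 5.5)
The paper gives no proof of this statement; it quotes it from \cite[Theorem 3.15]{GGZZ26}, so there is nothing in-paper to compare against. Your architecture is the standard one: Caccioppoli, then Sobolev--Poincar\'e with exponent $2_*<2$, then a reverse H\"older inequality with increment, then Gehring on a doubling space. The Sobolev--Poincar\'e and Gehring steps are fine. The Caccioppoli step, however, has a genuine gap in the choice of the base point $q$. The same $q$ must satisfy two conditions.
\begin{enumerate}[label=(\roman*)]
\item $\cos\d_\Y(u,q)\geq c_0>0$ on $2B$. Without this, neither testing $\Delta(1-\cos\d_\Y(u,q))\geq\cos\d_\Y(u,q)\,|\de u|^2_{\rm HS}$ against $\eta^2$ nor the geodesic-homotopy competitor controls the energy.
\item $\fint_{2B}\d^2_\Y(u,q)\de\m\leq C\fint_{2B}\fint_{2B}\d_\Y^2(u(x),u(y))\de\m(x)\de\m(y)$. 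Without this, Sobolev--Poincar\'e cannot be applied.
\end{enumerate}
Your claim that $\rho<\pi/2$ gives (i) for $q\in B_\rho(o)$ is false. You only get $\d_\Y(u,q)<2\rho$, which can be close to $\pi$. Choosing $q=o$ or the circumcentre of $u(2B)$ gives (i) but not (ii); the barycentre gives (ii) but not (i). In general no $q$ gives both. Suppose a part of $2B$ of relative measure $\epsilon$ is sent near $b$ and the rest near $a$, with $\d_\Y(a,b)>\pi/2$ (possible once $\rho>\pi/4$). Then (ii) forces $\d_\Y(q,a)=O(\sqrt\epsilon)$, so $\cos\d_\Y(u,q)<0$ on the $b$-part, and the energy there is uncontrolled. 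A single-ball energy argument cannot exclude this unless you already know that $u$ has small oscillation on $2B$.

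There are two ways to close the gap.

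\emph{Route (a).} Use Theorem \ref{thm:Holder_reg} first. On balls of radius below some $r_1(U)$, $u(2B)$ lies in a tiny ball, so the barycentre of $u_\#(\m|_{2B})$ satisfies (i) and (ii), and Gehring gives $|\de u|_{\rm HS}\in L^{2+\varepsilon}_{\rm loc}$. The price is that the constants now depend on the H\"older modulus of $u$, so you lose the stated uniformity over all balls with $2B\subset\Omega$.

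\emph{Route (b).} Replace $1-\cos\d_\Y(u,q)$ by a Serbinowski-type quotient, which keeps constants uniform. Set $A_q:=1-\cos\d_\Y(u,q)$, $m:=\cos\d_\Y(u,o)\geq\cos\rho$ and $\Phi_q:=A_q/m$. Test $\Delta A_q\geq(1-A_q)|\de u|^2_{\rm HS}$ against $m\varphi$, test $\Delta m\leq-m|\de u|^2_{\rm HS}$ against $A_q\varphi$, and subtract, exactly as the paper does to reach \eqref{eq:Serb_ineq}. This gives
\[
\int m^2\langle D\Phi_q,D\varphi\rangle\de\m\leq-\int m\,|\de u|^2_{\rm HS}\,\varphi\de\m\qquad\text{for all }\varphi\in\Lip_c(2B),\ \varphi\geq0.
\]
No sign condition on $\cos\d_\Y(u,q)$ is needed, because $m(1-A_q)+mA_q=m$. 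Now test with $\eta^2$ and use $|D\Phi_q|\leq C_\rho\,\d_\Y(u,q)\,|\de u|_{\rm HS}$. This yields $\int_B|\de u|^2_{\rm HS}\de\m\leq Cr^{-2}\int_{2B}\d^2_\Y(u,q)\de\m$ for every $q\in B_\rho(o)$, in particular for the barycentre, and the rest of your plan goes through. What you must still supply is the inequality for $A_q$ when $\d_\Y(u,q)$ ranges up to $2\rho<\pi$. Lemma \ref{lem:key_lemma} is stated only for the centre $o$ of a ball of radius less than $\pi/2$ containing the image.
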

    We now recall \cite[Theorem 3.12]{GGZZ26} which establishes the local H\"older regularity of $u$.
\begin{theorem}
\label{thm:Holder_reg}
    Let $(\X,\dist,\m)$ be an ${\rm RCD}(K,N)$ space and $(\Y,\d_\Y)$ be a ${\rm CAT}(1)$ space. Let $u:\Omega\subseteq\X\to\Y$ be a harmonic map such that ${\rm Im}(u)\subseteq B_{\rho}(o)$ with $\rho<\pi/2$. Then $u$ is locally H\"older continuous in $\Omega$.
\end{theorem}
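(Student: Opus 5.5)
The plan is to follow the classical route of Korevaar--Schoen and Serbinowski: obtain a Caccioppoli-type inequality from the convexity of $\dist_\Y(\cdot,q)$ on small balls of a ${\rm CAT}(1)$ space, upgrade it by a hole-filling argument to a Morrey-type decay of the energy, and conclude with a Campanato/Morrey embedding. The last step is available on ${\rm RCD}(K,N)$ spaces because they are locally doubling and support a local $(1,2)$-Poincar\'e inequality.

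\textbf{Step 1 (subharmonicity).} For $q\in B_\rho(o)$ consider $w_q:=1-\cos\dist_\Y(u,q)$. Since $\rho<\pi/2$, the function $\cos\dist_\Y(\cdot,q)$ has the ${\rm CAT}(1)$ concavity $(\cos \dist_\Y(\cdot,q))''\le -\cos \dist_\Y(\cdot,q)$ along geodesics contained in $B_\rho(o)$. I would perform a first variation of ${\rm E}_2$ along the competitors obtained by moving $u(x)$ towards $q$ along geodesics by an amount $t\varphi(x)$, with $\varphi\in\Lip_c(\Omega)$ and $\varphi\ge0$. This should give, in the weak sense of the definition above,
\begin{equation*}
\Delta w_q\ \ge\ c_\rho\,|\de u|_{\rm HS}^2\qquad\text{in }\Omega,
\end{equation*}
with $c_\rho>0$ depending on $\cos\rho$. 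Together with $|\nabla w_q|\le \sin\dist_\Y(u,q)\,|\de u|_{\rm HS}\le \dist_\Y(u,q)|\de u|_{\rm HS}$, this is the basic tool. Uniform convexity of the competitor construction (Korevaar--Schoen, Gigli--Tyulenev) is what makes the variation legitimate.

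\textbf{Step 2 (Caccioppoli and hole filling).} Fix a ball $B_{2r}\Subset\Omega$ and a cutoff $\eta$ equal to $1$ on $B_r$, supported in $B_{2r}$, with $|\nabla\eta|\le 2/r$. Testing with $\eta^2$ gives
\begin{equation*}
c_\rho\int_{B_r}|\de u|_{\rm HS}^2\le \frac{4}{r}\int_{B_{2r}\setminus B_r}\dist_\Y(u,q)\,|\de u|_{\rm HS}.
\end{equation*}
Here I choose $q$ to be the barycenter (center of mass) of $u$ over $B_{2r}$, which exists and is unique in the convex ball $B_\rho(o)$. Then Cauchy--Schwarz, the Poincar\'e inequality for metric-valued Sobolev maps
\begin{equation*}
\int_{B_{2r}}\dist_\Y^2(u,q)\le C r^2\int_{B_{2\lambda r}}|\de u|_{\rm HS}^2,
\end{equation*}
and Young's inequality give $\int_{B_r}|\de u|^2_{\rm HS}\le C\int_{B_{2\lambda r}\setminus B_r}|\de u|^2_{\rm HS}$. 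Widman's hole-filling trick, i.e. adding $C\int_{B_r}|\de u|_{\rm HS}^2$ to both sides, then yields
\begin{equation*}
\int_{B_r}|\de u|_{\rm HS}^2\le\theta\int_{B_{2\lambda r}}|\de u|_{\rm HS}^2,\qquad \theta=\frac{C}{C+1}<1.
\end{equation*}

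\textbf{Step 3 (decay and embedding).} Iterating the hole-filling inequality gives $\int_{B_r}|\de u|^2_{\rm HS}\le C(r/R)^{\beta}\int_{B_R}|\de u|^2_{\rm HS}$ for some $\beta>0$. Combined with local doubling and the Poincar\'e inequality, this gives the Campanato-type bound
\begin{equation*}
\fint_{B_r}\dist_\Y^2(u,q_{B_r})\le C r^{2}\fint_{B_r}|\de u|_{\rm HS}^2\le C r^{2\alpha}
\end{equation*}
for some $\alpha>0$. The bound uses the measure lower bound $\m(B_r)\ge c\,(r/R)^N\m(B_R)$; if $\beta$ does not exceed the resulting dimensional loss, I would first iterate the decay to improve the exponent, or run the argument on $w_q$ directly via a weak Harnack inequality. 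The Campanato characterization of H\"older functions, with the barycenters $q_{B_r}$ playing the role of averages and forming a Cauchy sequence, then gives local H\"older continuity of $u$.

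\textbf{Main obstacle.} I expect the main difficulty to be Step 3: converting the energy decay into a genuine Morrey bound with positive exponent in dimension $N>2$, since pure hole filling only beats the doubling exponent in low dimension. If that fails, my fallback is a De Giorgi/Moser argument on the family of subharmonic functions $w_q$. By the weak Harnack inequality on ${\rm RCD}$ spaces, $\sup_{B_r}w_q\le C\fint_{B_{2r}}w_q$. Choosing $q=q_{B_{2r}}$ and combining this with an oscillation reduction at each scale (with a mean-value comparison to control $\fint_{B_{2r}} w_q$ by the oscillation at the larger scale) yields a geometric decay of ${\rm osc}_{B_r}u$, hence H\"older continuity.
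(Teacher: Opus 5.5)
The paper does not prove this statement; it imports it from \cite[Theorem 3.12]{GGZZ26}. Your argument therefore has to stand on its own, and for $N>2$ it does not. Iterating $\int_{B_r}|\de u|^2_{\rm HS}\de\m\le\theta\int_{B_{2\lambda r}}|\de u|^2_{\rm HS}\de\m$ gives exactly the exponent $\beta=\log(1/\theta)/\log(2\lambda)$, which is small and uncontrolled, and further iteration cannot improve it. Bishop--Gromov only gives $\m(B_r)\gtrsim r^N$, so the Campanato bound $r^2\fint_{B_r}|\de u|^2_{\rm HS}\de\m\lesssim r^{2\alpha}$ needs $\beta>N-2$; Steps 2--3 close only when $N\le 2$. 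The fallback does not supply the missing contraction either. The estimate $\sup_{B_r}w_q\le C\fint_{B_{2r}}w_q\de\m$ is the local maximum principle for subsolutions, not the weak Harnack inequality. Combined with the minimality of the barycenter it only gives ${\rm osc}_{B_r}u\le C\,{\rm osc}_{B_{2r}}u$ with $C\ge 1$, and the ``oscillation reduction at each scale'' you invoke is exactly what has to be proved. In the scalar De Giorgi--Moser argument the contraction comes from applying the weak Harnack inequality simultaneously to the supersolutions $\sup u-u$ and $u-\inf u$. For a metric target you only have $\sup w_q-w_q$, one center at a time, and nothing in your proposal turns this into a strictly smaller enclosing ball.

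Step 1 is also wrong as stated when $\rho\ge\pi/4$. For $q\neq o$ the ${\rm CAT}(1)$ comparison gives $\Delta w_q\ge\cos\d_\Y(u,q)\,|\de u|^2_{\rm HS}$, and since $\d_\Y(u,q)$ can approach $2\rho$, this coefficient can be negative. For instance, take the identity map of a small region of $\mathbb S^2$ inside the open upper hemisphere and a point $q$ of that hemisphere at distance more than $\pi/2$ from the region: then $w_q$ is strictly superharmonic. You cannot take $q$ close to $u(B_{2r})$ without already knowing the oscillation is small.

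Both problems are handled by the Hildebrandt--Kaul--Widman strategy. Set $a=1-\cos\d_\Y(u,q)$ and $b=\cos\d_\Y(u,o)\ge\cos\rho$. Test $\Delta a\ge(1-a)|\de u|^2_{\rm HS}$ (valid since $\d_\Y(u,q)<2\rho<\pi$) against $b\varphi$, test Lemma \ref{lem:key_lemma} against $a\varphi$, and subtract, exactly as in the proof of \eqref{eq:Serb_ineq}. This gives $-\int b^2\langle D(a/b),D\varphi\rangle\de\m\ge\int b\,|\de u|^2_{\rm HS}\varphi\de\m$ for every $q\in B_\rho(o)$: a subsolution of a uniformly elliptic weighted operator with a quantitative energy gain. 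Then replace plain hole filling by Widman's version, weighted by the Green function $G_{x_0}$ of this operator on a larger ball. Testing with $\eta^2G_{x_0}$ gives geometric decay of $\int_{B_r(x_0)}G_{x_0}|\de u|^2_{\rm HS}\de\m$. Since $G_{x_0}\gtrsim r^2/\m(B_r(x_0))$ on $B_r(x_0)$, this is the Morrey bound $r^2\fint_{B_r(x_0)}|\de u|^2_{\rm HS}\de\m\le Cr^{2\alpha}$ in every dimension, after which your Campanato step applies.

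Carrying this out on ${\rm RCD}(K,N)$ spaces needs two-sided and gradient bounds for $G_{x_0}$, an approximation argument for the pole term (which has the right sign because $a/b\ge0$), and a Poincar\'e inequality on annular regions or a chaining substitute. That is where the real work lies, and it is missing from your proposal.
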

For the reader convenience we present \cite[Lemma 3.19]{GGZZ26}, which is a crucial differential inequality that we are going to exploit later on.
\begin{lemma}
\label{lem:key_lemma}
Let $u:\Omega\to\Y$ be a harmonic map with $\Omega\subset\X$ open set, $(\Y,\dist_\Y)$ which is a ${\rm CAT}(1)$ space and ${\rm Im}(u)\subseteq B_\rho(o)$ with $o\in\Y$, $\rho<\pi/2$. Let further $f_o(x):=\cos(\dist_{\Y}(u(x),o))$, then we have $f_o\in W^{1,2}(\Omega)$ and 
\begin{equation}
    \label{eq:key_inequality}
    \Delta f_o\leq -f_o |\de u|_{\rm HS}^2=-f_o(n+2) e_2^2[u],
\end{equation}
    weakly in $\Omega$.
\end{lemma}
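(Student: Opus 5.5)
The statement to prove is Lemma \ref{lem:key_lemma}: for $f_o=\cos(\dist_\Y(u,o))$ we want $f_o\in W^{1,2}(\Omega)$ and $\Delta f_o\le -f_o|\de u|^2_{\rm HS}$ weakly. I would argue by a first-variation computation, comparing $u$ with competitors obtained by pushing $u$ geodesically towards $o$, in the spirit of Korevaar--Schoen and Serbinowski.

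\textbf{Step 1: Sobolev regularity.} Since $u(\Omega)\subset B_\rho(o)$ with $\rho<\pi/2$, the function $y\mapsto\cos\dist_\Y(y,o)$ is Lipschitz on $B_\rho(o)$. Hence $f_o$ has Korevaar--Schoen density bounded by $e_2[u]$, and so $f_o\in W^{1,2}(\Omega)$ with $|\nabla f_o|\le C|\de u|_{\rm HS}$. Moreover $f_o\ge\cos\rho>0$.

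\textbf{Step 2: competitor maps.} Fix $\varphi\in\Lip_c(\Omega)$ with $\varphi\ge0$ and a small $t>0$. Define $u_t(x)$ as the point on the geodesic from $u(x)$ to $o$ at parameter $t\varphi(x)$, measured in the rescaled sense adapted to $\cos$. A convenient choice is to take the point $u_t(x)$ on $[u(x),o]$ such that
\[
\tan\dist_\Y(u_t(x),o)=(1-t\varphi(x))\tan\dist_\Y(u(x),o),
\]
or more simply the point at distance $(1-t\varphi)\dist_\Y(u,o)$ from $o$. Then $u_t\in{\rm KS}^{1,2}_u(\Omega,\Y)$, so $E_2[u,\Omega]\le E_2[u_t,\Omega]$.

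\textbf{Step 3: pointwise estimate.} The key input is a CAT(1) quadrilateral comparison. For $x,y$ close, with $P=u(x)$, $Q=u(y)$, $P_t=u_t(x)$, $Q_t=u_t(y)$, comparison with the sphere $\mathbb S^2$ gives, to first order in $t$,
\[
\cos\dist(P_t,Q_t)\text{-type bound}\;\Longrightarrow\;
\dist^2(P_t,Q_t)\le \dist^2(P,Q)\bigl(1-2t\tfrac{\varphi(x)+\varphi(y)}{2}\bigr)
-2t\,(\varphi(x)-\varphi(y))\bigl(f_o(x)-f_o(y)\bigr)/f_o+o(t)+O(t^2).
\]
This is Serbinowski's estimate. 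In it the $\cos$ factors arise because spherical geodesic variations towards $o$ shrink distances by the factor $\cos\dist(\cdot,o)$. The cleanest route is to use the chosen parametrization so that the zeroth-order term shrinks by exactly $f_o$-weighted amounts.

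\textbf{Step 4: integrate and take the limit.} Integrate the estimate against $\fint_{B_r(x)}\cdot\,r^{-2}$ and let $r\to0$, using the representation $e_2[u]=(n+2)^{-1/2}|\de u|_{\rm HS}$ and the $L^2$ convergence of $\mathbf{ks}_{2,r}$. The terms $(\varphi(x)-\varphi(y))(f_o(x)-f_o(y))$ converge to $c\,\nabla\varphi\cdot\nabla f_o$, with $c$ a dimensional constant consistent with the $n+2$ normalization. Minimality then gives
\[
0\le -\int\varphi f_o|\de u|^2_{\rm HS}-\int\nabla\varphi\cdot\nabla f_o ,
\]
which is exactly the weak inequality $\Delta f_o\le -f_o|\de u|_{\rm HS}^2$.

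\textbf{Main obstacle.} I expect the main obstacle to be Step 3: deriving the sharp spherical quadrilateral inequality with the correct coefficients in a CAT(1) space, where only comparison for triangles is available. I would try first a subdivision into two triangles $(o,P,Q)$ and $(o,Q,P_t)$, followed by spherical trigonometry expanded in $t$. A secondary difficulty is justifying the polarization limit for the cross term on RCD spaces, which should follow from the Gigli--Tyulenev theory of energy densities.
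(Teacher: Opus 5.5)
The paper contains no proof of this lemma; it is quoted from \cite[Lemma 3.19]{GGZZ26}, so I judge the proposal on its own. Your overall route is the standard one: first variation of the Korevaar--Schoen energy along a homotopy pushing $u$ towards $o$, then the limit $r\to0$. But Step 3, which carries all the content, is wrong as stated, and Step 4 does not follow from it.

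(i) The estimate already fails in $\mathbb S^2$. Take $\varphi\equiv1$ near $x,y$ and $u(x),u(y)$ at the same distance $r\in(0,\pi/2)$ from $o$. By the spherical law of cosines, $1-\cos\d_\Y(u_t(x),u_t(y))=\frac{\sin^2r_t}{\sin^2r}(1-\cos d)$ exactly. For your distance-scaling rule $\sin^2r_t/\sin^2r=1-2t\,r\cot r+O(t^2)$, and for your $\tan$ rule it is $1-2t\cos^2r+O(t^2)$. So $\d_\Y^2(u_t(x),u_t(y))$ exceeds your bound $(1-2t)d^2$ by $2t(1-r\cot r)d^2$, respectively $2t\sin^2r\,d^2$, up to $O(t\,d^4+t^2d^2)$.

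(ii) Even granting Step 3, integrating it produces $-\int\varphi|\de u|^2_{\rm HS}$ and $-\int\nabla\varphi\cdot\nabla f_o/f_o$, not the terms in Step 4. The weight $f_o$ on $|\de u|^2_{\rm HS}$ appears from nowhere.

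(iii) Your final display is the weak form of $\Delta f_o\ge f_o|\de u|^2_{\rm HS}$. The claim requires $0\le\int\nabla\varphi\cdot\nabla f_o-\int\varphi f_o|\de u|^2_{\rm HS}$, so the cross term must enter with the opposite sign to yours.

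(iv) An unweighted $o(t)$ error does not survive division by $r^2$. All errors must be controlled by $d^2$ or $|\varphi(x)-\varphi(y)|^2$.

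The quadrilateral comparison you list as the main obstacle is not needed. The points $u_t(x)\in[u(x),o]$ and $u_t(y)\in[u(y),o]$ lie on two sides of the single triangle $\triangle(o,u(x),u(y))$, whose perimeter is $<4\rho<2\pi$. Set $r_x:=\d_\Y(u(x),o)$, $r^t_x:=\d_\Y(u_t(x),o)$ and $d:=\d_\Y(u(x),u(y))$. Then ${\rm CAT}(1)$ comparison plus the spherical law of cosines give the exact inequality
\begin{equation*}
1-\cos\d_\Y(u_t(x),u_t(y))\le\frac{\sin r^t_x\sin r^t_y}{\sin r_x\sin r_y}\Bigl[(1-\cos d)-\bigl(1-\cos(r_x-r_y)\bigr)\Bigr]+1-\cos(r^t_x-r^t_y).
\end{equation*}
Now take the variation generated by $\varphi\,\nabla\cos\d_\Y(\cdot,o)$, i.e. $\partial_tr^t_x=-\varphi(x)\sin r^t_x$. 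Along this variation the model Hessian is isotropic ($\nabla^2\cos r=-\cos r\,g$ on $\mathbb S^2$), so no radial correction term appears. Then $\frac{\sin r^t_x\sin r^t_y}{\sin r_x\sin r_y}=1-t\bigl((\varphi f_o)(x)+(\varphi f_o)(y)\bigr)+O(t^2)$, and expanding the last term gives
\begin{equation*}
1-\cos\d_\Y(u_t(x),u_t(y))\le\bigl(1-t(\varphi f_o)(x)-t(\varphi f_o)(y)\bigr)(1-\cos d)+t\bigl(\varphi(x)-\varphi(y)\bigr)\bigl(f_o(x)-f_o(y)\bigr)+R_t,
\end{equation*}
with $|R_t|\le C\bigl(t^2(d^2+|\varphi(x)-\varphi(y)|^2)+t\,|\varphi(x)-\varphi(y)|\,d^3\bigr)$.

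To conclude:
\begin{itemize}
\item compute both energies with $2(1-\cos\d_\Y)$ in place of $\d_\Y^2$, which is harmless since $u$ is continuous (Theorem~\ref{thm:Holder_reg});
\item polarize the densities of the real functions $\varphi\pm f_o$; the common factor $(n+2)^{-1}$ cancels;
\item use ${\rm E}_2[u,\Omega]\le{\rm E}_2[u_t,\Omega]$.
\end{itemize}
This yields exactly $0\le\int\nabla\varphi\cdot\nabla f_o-\int\varphi f_o|\de u|^2_{\rm HS}$.

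For comparison, your Step 3 has the shape of the rule $\sin r^t_x=(1-t\varphi(x))\sin r_x$. For that rule the correct first-order expansion is
\begin{equation*}
(1-t\varphi(x)-t\varphi(y))d^2+2t(\varphi(x)-\varphi(y))\frac{f_o(x)-f_o(y)}{f_o}-2t\varphi\frac{(f_o(x)-f_o(y))^2}{f_o^2}.
\end{equation*}
Here the cross term has the opposite sign to yours. The radial term you drop is precisely what cancels after testing with $\varphi f_o$; without it you only reach $\Delta f_o\le-f_o|\de u|^2_{\rm HS}+|\nabla f_o|^2/f_o$.
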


We also recall here \cite[Proposition 3.24]{GGZZ26}, which gives a crucial distributional bound for $\gamma_t$. Note that we are dividing by $t$ the equation in \cite[Proposition 3.24]{GGZZ26}, so that it has the form we need.
\begin{proposition}
\label{prop:good_distributional_bound}
Let $t>0$ and $\gamma_t, a_t$ and $D_t$ as above. Then
    \begin{equation}
        \label{eq:good_distributional_bound}
        {\Delta \gamma_t}\geq pKa_t-c_t(a_t+\gamma_t)|\de u|^2_{\rm HS}\quad{\rm on}\; B
    \end{equation}
    in the weak sense, for all $t<t^*$ and $p\geqslant2$, where $c_t=1+o_t(1)$ uniformly in $B$. 
\end{proposition}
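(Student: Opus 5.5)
The plan is to prove \eqref{eq:good_distributional_bound} by the doubling-of-variables scheme of \cite{ZZ18}, in its ${\rm RCD}$ versions from \cite{Gig23,MS22}. First establish the inequality in a pointwise (viscosity/comparison) sense at every $x_0\in B$, then upgrade it to the weak sense. By the choice of $t_*$, for $x\in B$ the infimum defining $f_t(x)$ is attained in $\widehat B$, where $f=-F(\dist_\Y(u(x),u(y)))$. So $f_t$ is an inf-convolution of the two-point function
\[
\Psi_t(x,y):=\frac{\dist_\X^p(x,y)}{p\,t^{p-1}}-F\big(\dist_\Y(u(x),u(y))\big),
\]
and $-t\gamma_t(x)\le \Psi_t(x,y)$ for all $x,y$, with equality at $y=y_0\in S_t(x_0)$ realizing $L_t(x_0)$. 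Since $\gamma_t$ is continuous (by Theorem~\ref{thm:Holder_reg} and \cite[Lemma 3.16]{GGZZ26}), a lower bound on $\Delta\gamma_t$ reduces to an upper bound on a suitable Laplacian of $\Psi_t$ at the contact pair $(x_0,y_0)$, moving $x$ and $y$ simultaneously.

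\emph{Step 1: transport part.} Move $x$ and $y$ together along the optimal (geodesic/parallel-transport-like) perturbation that ${\rm RCD}(K,N)$ provides, e.g.\ via Kuwada duality and the heat-flow/Wasserstein contraction $W_p(h_s\delta_x,h_s\delta_y)\le e^{-Ks}\dist_\X(x,y)$. Then the joint ``Laplacian'' of $\dist_\X^p(x,y)/(p\,t^{p-1})$ at $(x_0,y_0)$ is bounded by $-K\,\dist_\X^p(x_0,y_0)/t^{p-1}=-pK\,t\,a_t(x_0)$. After dividing by $-t$, this gives exactly the term $pKa_t$.

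\emph{Step 2: target part.} Show that $(x,y)\mapsto -F(\dist_\Y(u(x),u(y)))$ has joint Laplacian at most $c_t(a_t+\gamma_t)\,t\,|\de u|^2_{\rm HS}$, evaluated at the relevant points. Here I would use harmonicity of $u$ together with the ${\rm CAT}(1)$ comparison, in the spirit of Lemma~\ref{lem:key_lemma}: $\cos$ of distances is superharmonic up to a $-\cos(\cdot)\,|\de u|^2_{\rm HS}$ error. The specific profile $F(s)=2\sin(s/2)+4\sin^2(s/2)$ is designed so that the curvature contribution has a favorable sign while $s<\pi/8$. One checks that $F(\dist_\Y)\approx t(\gamma_t+a_t)=D_t$ at the contact pair, since $D_t=F(\dist_\Y(u(x_0),u(y_0)))$ exactly. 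This is how the factor $a_t+\gamma_t$ arises. The remaining discrepancy between $|\de u|^2_{\rm HS}$ at $x_0$ and at $y_0$ is absorbed into $c_t=1+o_t(1)$, using $\dist_\X(x_0,y_0)\le (6p)^{1/p}t^{(p-1)/p}\to0$.

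\emph{Step 3: from pointwise to weak.} Pass from the pointwise/viscosity inequality to the weak one. I would use the equivalence of viscosity and distributional Laplacian bounds on ${\rm RCD}$ spaces for continuous functions \cite{MS22}, or alternatively Gigli's approach of testing against the heat flow and using the Laplacian comparison for $\dist_\X^p$. The right-hand side involves $|\de u|^2_{\rm HS}$, which is not continuous, so I would work at Lebesgue points and approximate. Theorem~\ref{thm:higher_integrability} ensures that $(a_t+\gamma_t)|\de u|^2_{\rm HS}\in L^1_{\rm loc}$ and that the error terms vanish in the limit.

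I expect the main obstacle to be Step~2 combined with Step~3. Making the two-point target estimate rigorous needs a nonsmooth version of ``$\Delta_{x}+\Delta_y$ of $F(\dist_\Y(u(x),u(y)))$'' that is compatible with the transport perturbation of Step~1. The only input available is the weak inequality of Lemma~\ref{lem:key_lemma} and $L^{2+\epsilon}$ integrability, not pointwise second-order information. I would first try to derive it by applying Lemma~\ref{lem:key_lemma} with base point $o=u(y)$ frozen, and then symmetrizing in $x$ and $y$.
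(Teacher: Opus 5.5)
The paper does not prove this statement. It imports it from \cite[Proposition 3.24]{GGZZ26} and only divides by $t$, using $\gamma_t=-f_t/t$ and $D_t/t=\gamma_t+a_t$. Your argument therefore has to stand on its own, and it does not. The doubling-of-variables framework is the natural one, but the step that carries all the content is missing, and the route you suggest for it cannot work.

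Freezing $o=u(y)$ in Lemma~\ref{lem:key_lemma} and symmetrizing bounds $\Delta_x$ and $\Delta_y$ of $-F(\dist_\Y(u(x),u(y)))$ separately. That is the product Laplacian on $\X\times\X$, which corresponds to moving $x$ and $y$ by \emph{independent} heat kernels. Step~1 instead needs the $W_p$-optimal coupling: along the independent one, $\dist_\X^p/(p\,t^{p-1})$ has Laplacian of order $(N+p)\dist_\X^{p-2}/t^{p-1}$, nowhere near $-pK\,t\,a_t$. Both parts of $\Psi_t$ must be estimated along the same coupling $\pi_s$, and $\int F(\dist_\Y(u(x'),u(y')))\de\pi_s$ is not determined by the two one-variable Laplacians. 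In the smooth picture the difference is the mixed second-order term in $(x,y)$. It has no sign and is not lower order. It can be of size $|\de u|^2_{\rm HS}/\dist_\Y$, and it cancels against the equally large negative terms that Lemma~\ref{lem:key_lemma} gives in each variable, leaving only the $O(\dist_\Y)\,|\de u|^2_{\rm HS}$ remainder you want. Controlling it requires a ${\rm CAT}(1)$ comparison for the pair $(u(x'),u(y'))$ along the transport coupling, in weak form and for a merely H\"older $u$. This is exactly what the cited proposition provides, and what the corresponding steps of \cite{Gig23,MS22} provide for ${\rm CAT}(0)$ targets. You flag it as the obstacle but give no idea for it.

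Two other steps also fail as stated. In Step~2 you absorb the difference between $|\de u|^2_{\rm HS}(x_0)$ and $|\de u|^2_{\rm HS}(y_0)$ into $c_t=1+o_t(1)$ using only $\dist_\X(x_0,y_0)\to0$. That is the smooth argument, which relies on continuity of $\de u$, and it has no counterpart here:
\begin{itemize}
\item $|\de u|^2_{\rm HS}$ is only known to lie in $L^{1+\varepsilon/2}_{\rm loc}$ (Theorem~\ref{thm:higher_integrability}).
\item Its value at the partner point $y_{t,x_0}$ is not even well defined without information on how $x\mapsto y_{t,x}$ distorts $\m$.
\item Its values at two nearby points can differ by an arbitrary factor.
\end{itemize}
A constant $c_t$, uniform on $B$, cannot absorb this. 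You would have to eliminate the partner-point contribution altogether, or control it in integrated form.

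Likewise, Step~3 cannot pass through contact points and Lebesgue points. Viscosity-type formulations need a right-hand side with pointwise meaning, whereas $(a_t+\gamma_t)|\de u|^2_{\rm HS}$ is only $L^1$. The inequality has to be derived directly in integrated form, tested against $\varphi\,\m$ with $\varphi\in\Lip_c(B)$, $\varphi\geq0$.
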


\section{Main results}
\label{sec:main_res}
We stress again that we shall fix $U\Subset\Omega$ and choose a chain of balls with radii such that the conditions at the end of Section \ref{sec:preliminaries} are in place.
\newline
To prove Theorem \ref{thm:Lipschitz_continuity} we need several preliminary results, starting from the following.

\begin{lemma}
\label{lem:point_Poinc}
Let $u:\Omega\subset\X\to\Y$ be an harmonic map with $u(\Omega)\subset B_\rho(o)$ and $\rho<\pi/2$ and let $U\Subset\Omega$ be open. There exist
$r_0,C>0$ and $\Lambda\geq1$ such that, for every $x,y\in U$ with
$\d_\X(y,x)<r_0$, setting $\ell=\d_\X(y,x)$, we have
\begin{equation*}
\d_\Y\bigl(u(y),u(x)\bigr)
\leq
C\ell
\left(
\fint_{B_{\Lambda \ell}(x)}|\de u|_{\rm HS}^2\de\m
\right)^{1/2}.
\end{equation*}
\end{lemma}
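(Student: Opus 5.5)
We will prove the statement through a pointwise Morrey-type estimate, combining a telescoping argument on dyadic balls with the higher integrability of Theorem \ref{thm:higher_integrability} and the continuity of $u$ from Theorem \ref{thm:Holder_reg}. Since $\rho<\pi/2$, we may work locally in a small ball of $\Y$, and there we use the Poincaré inequality for Sobolev maps into metric targets. Concretely, for $u\in{\rm KS}^{1,2}$ and a fixed point $q$, the function $\dist_\Y(u(\cdot),q)$ is in $W^{1,2}$ with $|\nabla \dist_\Y(u,q)|\le e_2$-type bounds. Combined with the $(1,1)$-Poincaré inequality on ${\rm RCD}(K,N)$ spaces, this gives a metric Poincaré inequality:
\begin{equation*}
\fint_{B_s(z)}\fint_{B_s(z)}\dist_\Y(u(w),u(w'))\de\m(w)\de\m(w')\le C s\Big(\fint_{B_{\lambda s}(z)}|\de u|_{\rm HS}^2\de\m\Big)^{1/2}.
\end{equation*}
One obtains it either by citing \cite{GT20} or by embedding $\Y$ isometrically into $\ell^\infty$ (Kuratowski) and applying the scalar Poincaré inequality componentwise, using $|\nabla \dist_\Y(u,q)|\le C|\de u|_{\rm HS}$ uniformly in $q$. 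Here the constants depend on $K,N$ and on the diameter of $U$, via local doubling.

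Next, for $x,y\in U$ with $\ell=\dist_\X(x,y)<r_0$, set $B_k:=B_{2^{-k}\ell}(x)$. Since $u$ is continuous (Theorem \ref{thm:Holder_reg}), every point of $U$ is a Lebesgue point of $u$ in the sense that the ball averages of $\dist_\Y(u(\cdot),u(x))$ on $B_k$ tend to $0$. Telescoping through the averages on consecutive balls and using doubling, we obtain
\begin{equation*}
\dist_\Y(u(x),\bar u_{B_0})\lesssim \sum_{k\ge0}2^{-k}\ell\Big(\fint_{B_{\lambda 2^{-k}\ell}(x)}|\de u|^2_{\rm HS}\Big)^{1/2},
\end{equation*}
where $\bar u_{B_0}$ denotes some reference value, such as a point $u(w_0)$ realizing an approximate median. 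Doing the same for $y$, with the balls centred at $y$ contained in $B_{2\ell}(x)$, handles $\dist_\Y(u(y),u(x))$. The issue is that the sum involves averages over shrinking balls, not the single average over $B_{\Lambda\ell}(x)$ required in the statement.

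This is the main obstacle: one cannot control small-ball averages of $|\de u|^2_{\rm HS}$ by a large-ball average for a general Sobolev map. Here we use harmonicity through the reverse Hölder inequality \eqref{doubling-energy}, combined with Hölder's inequality at the level of a Morrey estimate. Fix $q=2+\varepsilon$ and replace the $L^2$ Poincaré inequality at scale $s=2^{-k}\ell$ by the $L^{q}$ version. The standard Morrey/Sobolev-embedding telescoping then yields the right summability only if $q>N$, which fails in general. So instead we plan to iterate \eqref{doubling-energy} together with a Caccioppoli-type decay. Using Lemma \ref{lem:key_lemma} applied with $o=u(x)$, the function $1-\cos\dist_\Y(u,u(x))$ is subharmonic up to $|\de u|_{\rm HS}^2$. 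Testing with cutoffs, this gives the energy decay
\begin{equation*}
\int_{B_s(x)}|\de u|^2_{\rm HS}\le C\Big(\frac{s}{R}\Big)^{N-2+2\beta}\int_{B_R(x)}|\de u|^2_{\rm HS}
\end{equation*}
for some $\beta>0$; this is the Campanato estimate underlying Theorem \ref{thm:Holder_reg}. If the decay exponent is large enough relative to the doubling exponent, inserting it into the telescoping sum makes the series geometric in $k$ and bounded by $C\ell(\fint_{B_{\Lambda\ell}(x)}|\de u|^2)^{1/2}$. If only a small $\beta$ is available, we would fall back on a Hölder-improvement argument: bounding $\dist_\Y(u(y),u(x))$ by the oscillation on $B_{2\ell}(x)$, and controlling the oscillation via the Campanato characterisation, $\operatorname{osc}_{B_{2\ell}}u\lesssim \ell(\fint_{B_{\Lambda\ell}}|\de u|^2)^{1/2}$. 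This normalised oscillation estimate is what must really be justified, and I expect verifying it with the available decay rate to be the delicate point. The choice of $r_0$ ensures that $B_{\Lambda r_0}(x)\subset\Omega$ and that the images lie in a small ball, where $\cos$ and $\dist_\Y$ are comparable.
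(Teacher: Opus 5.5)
Your argument does not close. The step that carries all the weight is left unproved, as you acknowledge yourself.

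The telescoping produces $\sum_k 2^{-k}\ell\bigl(\fint_{B_{\lambda2^{-k}\ell}(x)}|\de u|^2_{\rm HS}\de\m\bigr)^{1/2}$. Bounding it by a single average on $B_{\Lambda\ell}(x)$ requires a scale-invariant decay of the energy, and you never derive one.

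The decay you write, $\int_{B_s}|\de u|^2_{\rm HS}\de\m\le C(s/R)^{N-2+2\beta}\int_{B_R}|\de u|^2_{\rm HS}\de\m$, is false in general ${\rm RCD}(K,N)$ spaces. Take $\R^n$ with Lebesgue measure as an ${\rm RCD}(0,N)$ space with $N>n+2$, and $u$ a small linear function into $\R$. The left side is then $\sim s^n$, and the inequality fails as $s\to0$.

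The correct, normalized form is $s^2\fint_{B_s}|\de u|^2_{\rm HS}\de\m\le C(s/R)^{2\beta}R^2\fint_{B_R}|\de u|^2_{\rm HS}\de\m$. With it your series is geometric for every $\beta>0$, so the ``small $\beta$'' worry is beside the point. But nothing in your sketch produces it. Testing Lemma \ref{lem:key_lemma} with cutoffs gives only a Caccioppoli inequality $\int_{B_s}|\de u|^2_{\rm HS}\de\m\le Cs^{-2}\int_{B_{2s}}\d_\Y^2(u,q)\de\m$. Passing from there to a normalized decay is exactly where a genuine sup bound is needed. Your fallback simply assumes such a bound, $\operatorname{osc}_{B_{2\ell}(x)}u\lesssim\ell\bigl(\fint_{B_{\Lambda\ell}(x)}|\de u|^2_{\rm HS}\de\m\bigr)^{1/2}$, but that already implies the statement you are trying to prove. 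So the proposal is circular at its key point.

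The missing idea is the mean value ($L^\infty$--$L^2$) inequality for nonnegative subharmonic functions. It lets you work at the single scale $\ell$ and makes the telescoping unnecessary.

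For every $q$, the function $z\mapsto\d_\Y(u(z),q)$ is subharmonic \cite[Proposition 3.5]{GGZZ26}. Locally you could equally use $1-\cos\d_\Y(u,q)$: it is subharmonic by Lemma \ref{lem:key_lemma} and comparable to $\tfrac12\d_\Y^2(u,q)$ on small images, which Theorem \ref{thm:Holder_reg} provides.

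The argument then runs in three steps:
\begin{enumerate}
\item Apply the mean value inequality at $y$ with $q=u(x)$ to get $\d_\Y^2(u(y),u(x))\le C\fint_{B_\ell(y)}\d_\Y^2(u(z),u(x))\de\m(z)$.
\item Apply it again at $x$ with $q=u(z)$ and use doubling to get $\d_\Y^2(u(y),u(x))\le C\fint_{B_{2\ell}(x)}\fint_{B_{2\ell}(x)}\d_\Y^2(u(z),u(w))\de\m(z)\de\m(w)$.
\item Conclude with one Poincar\'e inequality at scale $\ell$.
\end{enumerate}
This is the paper's proof. It needs neither a decay estimate nor the higher integrability of Theorem \ref{thm:higher_integrability}. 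You had the right object in hand, the subharmonicity of $1-\cos\d_\Y(u,u(x))$, but used it for a Caccioppoli/Campanato argument instead of a pointwise mean value bound.

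Separately, your Kuratowski derivation of the metric Poincar\'e inequality is not valid as stated. Componentwise scalar inequalities do not control the supremum over components inside the double integral. Simply cite \cite[Lemma 3.10]{GGZZ26} instead.
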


\begin{proof}
We choose $r_0>0$ such that every ball mentioned in the sequel lies
inside $\Omega$. For fixed $a\in U$, the map
\begin{equation*}
z\longmapsto \d_\Y\bigl(u(z),u(a)\bigr)
\end{equation*}
is subharmonic; see, for example \cite[Proposition 3.5]{GGZZ26}.
The Harnack inequality then gives, by \cite[Theorem 3.7]{GGZZ26} and Jensen, applied
at fixed $x$ and with $\ell=\d_\X(x,y)<r_0$,
\begin{equation*}
\d_\Y^2\bigl(u(y),u(x)\bigr)
\leq
C\fint_{B_\ell(y)}
\d_\Y^2\bigl(u(z),u(x)\bigr)\de\m(z).
\end{equation*}

We now do the same with the other variable, so that
\begin{equation*}
\d_\Y^2\bigl(u(y),u(x)\bigr)
\leq
C\fint_{B_\ell(x)}\fint_{B_\ell(y)}
\d_\Y^2\bigl(u(z),u(w)\bigr)\de\m(z)\de\m(w).
\end{equation*}
Since
\begin{equation*}
B_\ell(x)\cup B_\ell(y)\subset B_{2\ell}(x)
\end{equation*}
and the measure $\m$ is locally doubling, we get
\begin{equation*}
\d_\Y^2\bigl(u(y),u(x)\bigr)
\leq
C\fint_{B_{2\ell}(x)}\fint_{B_{2\ell}(x)}
\d_\Y^2\bigl(u(z),u(w)\bigr)\de\m(z)\de\m(w).
\end{equation*}
We now apply the Poincaré inequality (see \cite[Lemma 3.10]{GGZZ26}), to get
\begin{equation*}
\d_\Y^2\bigl(u(y),u(x)\bigr)
\leq
C\ell^2\fint_{B_{\Lambda \ell}(x)}
|\de u|^2_{\rm HS}(z)\de\m(z),
\end{equation*}
proving the lemma.
\end{proof}

We now set
\begin{equation*}
\mathcal{M}(|\de u|_{\rm HS}^2)(x)
:=
\sup_{0<s<\Lambda r_0}
\fint_{B_s(x)}\chi_{2B}|\de u|_{\rm HS}^2\de\m,
\end{equation*}
and shrink $t_0$ so that $B_{\Lambda t_0}\subset 2B$.
The previous lemma then reads as
\begin{equation*}
\d_\Y\bigl(u(y),u(x)\bigr)
\leq
C\,\d_\X(y,x)
\bigl(\mathcal{M}(|\de u|_{\rm HS}^2)(x)\bigr)^{1/2}.
\end{equation*}

We now have another crucial lemma.

\begin{lemma}
There exist $t_0>0$ and an appropriate choice of $p>3$ such that
\begin{equation*}
\sup_{t\in(0,t_0)}
\|\gamma_t\|_{L^2(B)}
<+\infty.
\end{equation*}
\end{lemma}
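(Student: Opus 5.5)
The plan is to bound $\gamma_t$ pointwise by a power of the maximal function $\mathcal{M}(|\de u|_{\rm HS}^2)$ that does not depend on $t$. The $L^2$ bound then follows from the higher integrability of Theorem \ref{thm:higher_integrability} combined with the $L^q$-boundedness ($q>1$) of the maximal operator on locally doubling spaces. The integrability exponent given by Theorem \ref{thm:higher_integrability} is what fixes how large $p$ must be.

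\emph{Step 1 (sign and localization).} Since $f_t\leq 0$, we have $\gamma_t\geq 0$, so only an upper bound is needed. For $x\in B$ and $t<t_0$, every competitor $y$ that matters satisfies
\[
\ell:=\d_\X(x,y)\leq (6p)^{1/p}t^{(p-1)/p}.
\]
Any $y$ farther away gives a positive value in the infimum defining $f_t(x)$ and is irrelevant because $f_t\leq 0$. Shrinking $t_0$, we may therefore assume $\ell<r_0$, $y\in\widehat B$, and $B_{\Lambda\ell}(x)\subset 2B$. Then Lemma \ref{lem:point_Poinc} in its maximal-function form applies and gives
\[
\d_\Y(u(x),u(y))\leq C\ell\,\mathcal{M}^{1/2}(x),\qquad \mathcal{M}:=\mathcal{M}(|\de u|_{\rm HS}^2).
\]

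\emph{Step 2 (pointwise bound).} Since $\d_\Y(u(x),u(y))\leq 2s<\pi/4$, we have $F(\sigma)\leq \sigma+\sigma^2\leq C\sigma$ on the relevant range. Hence
\[
t\gamma_t(x)=\sup_y\Big[F(\d_\Y(u(x),u(y)))-\frac{\ell^p}{pt^{p-1}}\Big]\leq \sup_{\ell\geq0}\Big[A\ell-\frac{\ell^p}{pt^{p-1}}\Big],\qquad A:=C\mathcal{M}^{1/2}(x).
\]
The one-variable maximization is attained at $\ell=A^{1/(p-1)}t$, with value $(1-\tfrac1p)A^{p/(p-1)}t$. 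This yields
\[
0\leq\gamma_t(x)\leq C\,\mathcal{M}(x)^{\frac{p}{2(p-1)}}\qquad\text{for all }x\in B,\ t\in(0,t_0),
\]
with $C$ independent of $t$. Using $F(\sigma)\leq C\sigma$ is deliberate: keeping the quadratic term $\sigma^2$ would produce a worse power of $\mathcal{M}$, which is why the smallness of the oscillation from Theorem \ref{thm:Holder_reg} is used here.

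\emph{Step 3 (integrability and choice of $p$).} Squaring gives
\[
\|\gamma_t\|_{L^2(B)}^2\leq C\int_B\mathcal{M}^{\frac{p}{p-1}}\de\m.
\]
By Theorem \ref{thm:higher_integrability}, applied on $2B$ with $4B\subset\Omega$, the function $\chi_{2B}|\de u|^2_{\rm HS}$ lies in $L^{1+\varepsilon/2}$. The (local, truncated at scale $\Lambda r_0$) Hardy–Littlewood maximal operator is bounded on $L^q$ for $q>1$, since $\m$ is doubling on bounded sets of an ${\rm RCD}(K,N)$ space. It therefore suffices that
\[
\frac{p}{p-1}\leq 1+\frac{\varepsilon}{2}\quad\Longleftrightarrow\quad p\geq 1+\frac{2}{\varepsilon}.
\]
We choose $p>\max\{3,1+2/\varepsilon\}$ and then shrink $t_0=t_*$ accordingly, since the constraints in Step 1 depend on $p$.

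The main point requiring care is Step 1: one must ensure that all balls $B_{\Lambda\ell}(x)$ for the relevant $y$ stay inside $2B$, so that the truncated maximal function bounds the Poincaré-type estimate. This is where shrinking $t_0$ relative to $r_0$, $\Lambda$ and ${\rm dist}(\overline B,\X\setminus\widehat B)$ is used. The remaining work is the elementary optimization in Step 2 and the standard maximal inequality in Step 3.
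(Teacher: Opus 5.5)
Your proposal is correct and follows essentially the same route as the paper. Both arguments use the Poincaré-type bound of Lemma \ref{lem:point_Poinc} in maximal-function form, the Young optimization in $\ell$ giving $0\le\gamma_t\le C\,\mathcal{M}(|\de u|_{\rm HS}^2)^{p/(2(p-1))}$ uniformly in $t$, and then the local maximal inequality with Theorem \ref{thm:higher_integrability} once $p/(p-1)\le 1+\varepsilon/2$. The differences are minor: you take the supremum over all nearby competitors instead of a minimizer $y_{t,x}$, you make the threshold $p\ge 1+2/\varepsilon$ explicit, and $F(\sigma)\le C\sigma$ in fact needs only bounded distances (even $F(\sigma)\le 6\sigma$ for all $\sigma\ge0$), not the small oscillation.
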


\begin{proof}
Let $t_0>0$ and $t\in(0,t_0)$, and choose $y_{t,x}\in S_t(x)$ so that
\begin{equation*}
L_t(x)=\d_\X(x,y_{t,x}).
\end{equation*}
By continuity, we can decrease $t_0$ so that
\begin{equation*}
y_{t,x}\in B_{r_0}(x),
\end{equation*}
where $r_0$ is the one of Lemma \ref{lem:point_Poinc}.
We can therefore apply Lemma \ref{lem:point_Poinc}, together with the fact that
$F(s)\leq Cs$, to obtain
\begin{equation*}
D_t(x)
=
F\left(\d_\Y\bigl(u(x),u(y_{t,x})\bigr)\right)
\leq
C L_t(x)
\bigl(\mathcal{M}(|\de u|_{\rm HS}^2)(x)\bigr)^{1/2}.
\end{equation*}
Consequently,
\begin{equation*}
\gamma_t(x)
=
\frac{D_t(x)}{t}-a_t(x)
\leq
C\frac{L_t(x)}{t}
\bigl(\mathcal{M}(|\de u|_{\rm HS}^2)(x)\bigr)^{1/2}
-
\frac{L_t^p(x)}{p\,t^p}.
\end{equation*}
For
\begin{equation*}
A
=
C\bigl(\mathcal{M}(|\de u|_{\rm HS}^2)(x)\bigr)^{1/2},
\end{equation*}
we apply Young's inequality, where
\begin{equation*}
q^{-1}+p^{-1}=1,
\end{equation*}
to get
\begin{equation*}
A\frac{L_t(x)}{t}
-
\frac{L_t^p(x)}{p\,t^p}
\leq
\frac{A^q}{q}.
\end{equation*}
Hence
\begin{equation*}
0\leq\gamma_t
\leq
C\bigl(\mathcal{M}(|\de u|_{\rm HS}^2)(x)\bigr)^{q/2}.
\end{equation*}

Choosing $p$ large enough so that
\begin{equation*}
|\de u|_{\mathrm{HS}}^2\in L^q,
\qquad q>1,
\end{equation*}
which is possible thanks to Theorem \ref{thm:higher_integrability}, we obtain
\begin{equation*}
\int_B\gamma_t^2\de\m
\leq
C\int_B\bigl(\mathcal{M}(|\de u|_{\rm HS}^2)\bigr)^q\de\m
\leq
C\int_{2B}|\de u|_{\rm HS}^{2q}\de\m,
\end{equation*}
which proves the claim. Here we also used the standard maximal-function
inequality.
\end{proof}

The idea is now to integrate $\gamma_t$ in order to obtain important
estimates on its integrated version, which will propagate the regularity.

\begin{lemma}[Time averaging]
Let
\begin{equation*}
0<T<\delta<t_0,
\end{equation*}
with $t_0$ as in the previous lemma, and define
\begin{equation*}
G_T
:=
T\int_T^\delta\frac{\gamma_t}{t^2}\,dt,
\qquad
A_T
:=
T\int_T^\delta\frac{a_t}{t^2}\,dt.
\end{equation*}
Then
\begin{equation*}
G_T,A_T\in W_{\mathrm{loc}}^{1,2}(B)
\end{equation*}
and
\begin{equation*}
(p-1)A_T
=
2G_T+\frac{T}{\delta}\gamma_\delta-\gamma_T
\qquad
\m{\rm -a.e.\,in }\;B.
\end{equation*}
Moreover we have
\begin{equation}
\label{eq:improved_distr_bound}
\Delta G_T
\geq
-pK^-A_T
-c_\delta(A_T+G_T)|\de u|_{\rm HS}^2,
\end{equation}
weakly in $B$, where $c_\delta=\sup_{t\in(0,\delta)}c_t$.
\end{lemma}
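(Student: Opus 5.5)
The plan is to treat the two claims separately. The algebraic identity follows from the time derivative of the Hopf--Lax semigroup, and the differential inequality follows by integrating Proposition \ref{prop:good_distributional_bound} in time against the weight $T/t^2$.

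\emph{Step 1: the identity.} Fix $x\in B$. The map $t\mapsto f_t(x)$ is an infimum of functions that are nonincreasing and locally Lipschitz in $t$ on $[T,\delta]$. Hence it is nonincreasing and locally Lipschitz, and the usual Hopf--Lax argument gives, for all but countably many $t$,
\begin{equation*}
\frac{\d}{\d t}f_t(x)=-\frac{p-1}{p}\,\frac{L_t^p(x)}{t^p}=-(p-1)a_t(x).
\end{equation*}
For this one compares with a minimiser $y_{t,x}$ at nearby times. The min and max of $\d_\X(x,\cdot)$ over $S_t(x)$ agree except at countably many $t$, which is the only place where the choice of $L_t$ as a minimum matters. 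Since $\gamma_t=-f_t/t$, we get
\begin{equation*}
\frac{\d}{\d t}\Big(\frac{\gamma_t}{t}\Big)=\frac{(p-1)a_t-2\gamma_t}{t^2}.
\end{equation*}
Integrating from $T$ to $\delta$ and multiplying by $T$ gives $(T/\delta)\gamma_\delta-\gamma_T=(p-1)A_T-2G_T$, which is the claim pointwise on $B$. The functions $(t,x)\mapsto\gamma_t(x),a_t(x)$ are jointly measurable: $\gamma$ is continuous in $t$, and $a_t$ is a.e.\ a $t$-derivative. Both are bounded on $[T,\delta]\times B$, since $0\le\gamma_t\le 6/t$ and $a_t\le D_t/t\le 6/t$. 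So $G_T$ and $A_T$ are well defined and bounded.

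\emph{Step 2: Sobolev regularity.} I would first obtain a Caccioppoli estimate for $\gamma_t$ that is uniform in $t\in[T,\delta]$. By Proposition \ref{prop:good_distributional_bound}, $\gamma_t\ge0$ is a bounded function with $\Delta\gamma_t\ge -g_t$, where $g_t:=pK^-a_t+c_t(a_t+\gamma_t)|\de u|_{\rm HS}^2\in L^1(B)$ with bounds uniform in $t$. I take $\gamma_t\in W^{1,2}_{\rm loc}(B)$ from the cited work, which is needed for the Laplacian to make sense. Testing with $\varphi^2\gamma_t$ then gives
\begin{equation*}
\int\varphi^2|\nabla\gamma_t|^2\de\m\le C\int|\nabla\varphi|^2\gamma_t^2\de\m+C\int\varphi^2\gamma_t\,g_t\de\m\le C(T,\varphi).
\end{equation*}
Since $t\mapsto\gamma_t$ is continuous in $L^2$ and bounded in $W^{1,2}(\mathrm{supp}\,\varphi)$, the Bochner integral $G_T$ lies in $W^{1,2}_{\rm loc}(B)$ and $\nabla G_T=T\int_T^\delta t^{-2}\nabla\gamma_t\,\d t$. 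Alternatively, this follows by lower semicontinuity applied to Riemann sums. The identity from Step 1 then yields $A_T\in W^{1,2}_{\rm loc}(B)$ as well.

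\emph{Step 3: the inequality.} Take $\varphi\in\Lip_c(B)$ with $\varphi\ge0$. By Fubini, justified by the uniform bounds of Step 2,
\begin{equation*}
-\int\nabla G_T\cdot\nabla\varphi\de\m=T\int_T^\delta\frac{1}{t^2}\Big(-\int\nabla\gamma_t\cdot\nabla\varphi\de\m\Big)\d t\ge T\int_T^\delta\frac{1}{t^2}\int\varphi\big(pKa_t-c_t(a_t+\gamma_t)|\de u|^2_{\rm HS}\big)\de\m\,\d t.
\end{equation*}
Next use $pKa_t\ge-pK^-a_t$, $a_t,\gamma_t\ge0$ and $0<c_t\le c_\delta$, and swap the integrals back. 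This gives exactly \eqref{eq:improved_distr_bound}.

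I expect the main obstacle to be the rigorous justification of Step 2, namely uniform-in-$t$ $W^{1,2}$ control of $\gamma_t$ together with measurability in $t$ of $\nabla\gamma_t$. By contrast, the time-derivative formula in Step 1 is standard Hopf--Lax theory.
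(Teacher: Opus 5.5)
Your proposal is correct. Steps 1 and 3 are essentially the paper's argument. Applying the fundamental theorem of calculus to the Lipschitz map $t\mapsto\gamma_t(x)/t$ is the same computation as the paper's integration by parts, and it gives the identity at every $x\in B$, not just $\m$-a.e. Your use of $pKa_t\geq -pK^-a_t$ and $c_t\le c_\delta$ is exactly what produces the $K^-$ in \eqref{eq:improved_distr_bound}.

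The genuine difference is in Step 2.
\begin{itemize}
\item \emph{The paper's route.} It gets Sobolev regularity from the structure of the infimum itself. For $t\in[T,\delta]$ the minimizers stay in a compact $W\Subset\widehat B$, so $f_t=\inf_j q_{t,y_j}$ over a countable dense family. Each $q_{t,y_j}$ has weak upper gradient at most $C_{T,\delta}(1+|\de u|_{\rm HS})$, uniformly in $j$ and $t$, and this bound survives a countable infimum. The argument is self-contained and does not use Proposition \ref{prop:good_distributional_bound}. It yields a pointwise, $t$-uniform gradient bound and global membership $f_t\in W^{1,2}(B)$.
\item \emph{Your route.} You read the qualitative fact $\gamma_t\in W^{1,2}_{\rm loc}(B)$ off Proposition \ref{prop:good_distributional_bound}. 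This is legitimate, since a weak Laplacian bound is only defined for $W^{1,2}_{\rm loc}$ functions. You then make it quantitative by a Caccioppoli estimate with test function $\varphi^2\gamma_t$, using $0\le\gamma_t\le 6/T$ and the uniform $L^1$ bound on $g_t$. This softer energy argument gives only cutoff-dependent local control of $\nabla\gamma_t$ and borrows its qualitative regularity from the cited source, but local control is all the statement needs.
\end{itemize}

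The measurability point you flag closes quickly, and the paper simply asserts it. Since $|\partial_t f_t|=(p-1)a_t\le 6(p-1)/t$, the map $t\mapsto\gamma_t$ is Lipschitz into $L^\infty(B)$ on $[T,\delta]$. Continuity in $L^2$ together with boundedness in the Hilbert space $W^{1,2}$ gives weak continuity into $W^{1,2}$. Pettis's theorem then gives Bochner measurability, so the gradient commutes with the time integral.
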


\begin{proof}
First let us introduce 
\begin{equation*}
\widehat{L}_t(x)
:=
\max_{y\in S_t(x)}\d_\X(y,x).
\end{equation*}
and then we observe that the map
\begin{equation*}
t\longmapsto f_t(x)
\end{equation*}
is locally Lipschitz on $(0,t_0)$ and
\begin{equation*}
\partial_t^-f_t(x)
=
-\frac{p-1}{p\,t^p}L_t^p(x),
\qquad
\partial_t^+f_t(x)
=
-\frac{p-1}{p\,t^p}\widehat{L}_t^p(x).
\end{equation*}
thanks to the usual differentiability properties of the Hopf-Lax semigroup (see \cite[Theorem 3.3]{ACMcCS21} and the monograph \cite{AmbrosioGigliSavare08} for example).
At every $t$ for which $f_t(x)$ is differentiable, we therefore get
\begin{equation*}
\partial_t f_t(x)
=
-(p-1)a_t(x).
\end{equation*}
Using the relation between $f_t$ and $\gamma_t$, we obtain
\begin{equation*}
(p-1)a_t
=
\gamma_t+t\,\partial_t\gamma_t
\end{equation*}
for $\mathscr{L}^1\otimes m$-a.e. $(t,x)$.

Now fix $T>0$. Up to choosing $\delta>0$ small, there exists a
relatively compact set $W\Subset \widehat{B}$
containing every minimizer $y$ associated with $x\in B$ and
$t\in[T,\delta]$. For these minimizers, set
\begin{equation*}
q_{t,y}(x)
:=
\frac{\d_\X^p(x,y)}{p\,t^{p-1}}
-
F\left(\d_\Y\bigl(u(y),u(x)\bigr)\right).
\end{equation*}
First observe that since $u$ is continuous and $X$ is separable we can actually write $f_t(x)=\inf_{j\in\N}q_{t,y_j}(x)$, where $(y_j)_j$ is a \emph{countable} and dense set of $\overline{\widehat{B}}$. Then we have
\begin{equation*}
|Dq_{t,y_j}|
\leq
C_{T,\delta}\bigl(1+|\de u|_{\mathrm{HS}}\bigr),
\end{equation*}
uniformly in $j\in\N$ and $t\in[T,\delta]$. Since these functions all have the same bound on the weak upper gradient, taking a countable infimum allows that bound to go through the infimization and we get
\begin{equation*}
|Df_t|
\leq
C_{T,\delta}\bigl(1+|\de u|_{\mathrm{HS}}\bigr),
\end{equation*}
and hence
\begin{equation*}
f_t\in W^{1,2}(B).
\end{equation*}
Thanks to this, $G_T$ and $DG_T$ are well-defined as Bochner integrals,
and
\begin{equation*}
DG_T
=
T\int_T^\delta\frac{D\gamma_t}{t^2}\,dt
\in L^2(B).
\end{equation*}
Integrating by parts, we get
\begin{align*}
(p-1)A_T
&=
T\int_T^\delta
\left(
\frac{\gamma_t}{t^2}
+
\frac{\partial_t\gamma_t}{t}
\right)\,dt
\\
&=
2T\int_T^\delta\frac{\gamma_t}{t^2}\,dt
+
\frac{T\gamma_\delta}{\delta}
-
\gamma_T.
\end{align*}
Therefore,
\begin{equation*}
(p-1)A_T
=
2G_T+\frac{T}{\delta}\gamma_\delta-\gamma_T,
\end{equation*}
which also implies that
\begin{equation*}
A_T\in W^{1,2}(B).
\end{equation*}
Finally, to prove \eqref{eq:improved_distr_bound}, let
\begin{equation*}
\varphi\in\operatorname{Lip}_c(B),
\qquad
\varphi\geq0,
\end{equation*}
and integrate \eqref{eq:good_distributional_bound} against the measure
$Tt^{-2}\de t$ to obtain
\begin{equation*}
-\int_B\langle DG_T,D\varphi\rangle\de\m
\geq
-pK\int_BA_T\varphi\de\m
-c_\delta\int_B(A_T+G_T)|\de u|_{\rm HS}^2\varphi\de\m.
\end{equation*}
\end{proof}

\paragraph{Remark.}
Observe that, for $t\in[T,\delta]$, $a_t$ and $\gamma_t$ are uniformly
bounded, so all the terms in the previous lemma are well-defined.

\begin{proposition}
There exist $\delta>0$ and constants $\lambda<2$ and $\mu\geq0$ such
that, for every $T\in(0,\delta)$, we can construct
\begin{equation*}
\overline{G}_T\geq0,
\qquad
\overline{G}_T\in W^{1,2}(B),
\end{equation*}
satisfying
\begin{equation}
\label{eq:improved_bound2}
\Delta\overline{G}_T
\geq
-\mu\overline{G}_T
-\lambda |\de u|_{\rm HS}^2\overline{G}_T
\end{equation}
weakly in $B$. Moreover,
\begin{equation}
\label{eq:G_T_bounded}
\sup_{T\in(0,\delta)}
\left\|\overline{G}_T\right\|_{L^2(B)}
<\infty,
\qquad
\gamma_T\leq2\overline{G}_T
\quad \m{\rm-a.e.}
\end{equation}
\newline
Finally, if
\begin{equation*}
f_{o_B}=\cos\bigl(\d_\Y(u,o_B)\bigr),
\qquad
\psi:=f_{o_B}-\frac12,
\qquad
Z_T:=\frac{\overline{G}_T}{\psi},
\end{equation*}
with $o_B\in B$, then
\begin{equation}
\label{eq:Serb_ineq}
\int_B
\psi^2\left\langle DZ_T,D\varphi\right\rangle\de\m
\leq
\mu\int_B\psi^2Z_T\varphi\de\m
\end{equation}
for every $\varphi\in\operatorname{Lip}_c(B)$ with $\varphi\geq0$.
\end{proposition}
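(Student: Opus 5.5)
I would build $\overline{G}_T$ as a positive combination of $G_T$ and $A_T$ and use the identity from the Time averaging lemma to trade $A_T$ for $G_T$. That identity gives
\begin{equation*}
A_T=\frac{1}{p-1}\Bigl(2G_T+\frac{T}{\delta}\gamma_\delta-\gamma_T\Bigr)\leq\frac{2G_T+\gamma_\delta}{p-1},
\end{equation*}
using $\gamma_T\geq0$ and $T<\delta$. Here $\gamma_\delta$ is a fixed bounded function, because $\delta$ is fixed.

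The construction is $\overline{G}_T:=G_T+\frac{1}{p-1}\gamma_\delta+\epsilon_0$, or more simply $\overline{G}_T:=G_T+c_0$ with $c_0:=\|\gamma_\delta\|_{L^\infty(B)}/(p-1)+1$. Then $A_T\leq \frac{2}{p-1}\overline{G}_T$, and $A_T+G_T\leq(1+\frac{2}{p-1})\overline{G}_T=\frac{p+1}{p-1}\overline{G}_T$. Plugging this into \eqref{eq:improved_distr_bound} gives
\begin{equation*}
\Delta\overline{G}_T=\Delta G_T\geq -\frac{2pK^-}{p-1}\overline{G}_T-c_\delta\frac{p+1}{p-1}|\de u|^2_{\rm HS}\overline{G}_T .
\end{equation*}
Since $c_\delta=1+o_\delta(1)$, we can choose $p>3$ (large, and also compatible with the higher-integrability choice of the earlier lemma) and then $\delta$ small so that $\lambda:=c_\delta\frac{p+1}{p-1}<2$. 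Then $\mu:=\frac{2pK^-}{p-1}$.

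For the bound $\gamma_T\leq 2\overline{G}_T$, I would again use the identity, now with $A_T\geq0$: $\gamma_T\leq 2G_T+\frac{T}{\delta}\gamma_\delta\leq 2\overline{G}_T$, since $c_0\geq\gamma_\delta$ (possibly after enlarging $c_0$). For the uniform $L^2$ bound, $G_T=T\int_T^\delta\gamma_t t^{-2}\,dt$ is an average against a measure of mass $T(1/T-1/\delta)\leq1$. Minkowski's inequality and the previous lemma then give $\|G_T\|_{L^2(B)}\leq\sup_t\|\gamma_t\|_{L^2(B)}<\infty$. Positivity and the $W^{1,2}$ regularity follow from the Time averaging lemma.

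Finally comes \eqref{eq:Serb_ineq}. On $2B$ we have $u\in B_s(o_B)$ with $s<\pi/8$, so $f_{o_B}\geq\cos(\pi/8)$ and $\psi\geq\cos(\pi/8)-\frac12>0$. By \cref{lem:key_lemma} (with $o=o_B$), $\Delta\psi\leq-(\psi+\frac12)|\de u|_{\rm HS}^2$. Moreover $\psi\in W^{1,2}$, $\psi$ is bounded away from $0$, and $\psi\leq 1/2$.

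Write $\overline{G}_T=\psi Z_T$. Testing with $\psi\varphi$ in the inequality for $\overline{G}_T$, and with $Z_T\varphi$ in the inequality for $\psi$, and subtracting, the usual computation $\psi^2\langle DZ,D\varphi\rangle=\langle D\overline G,D(\psi\varphi)\rangle-\langle D\psi,D(Z\varphi)\rangle$ (valid by the Leibniz rule) gives
\begin{equation*}
\int\psi^2\langle DZ_T,D\varphi\rangle\leq\int\bigl(\mu+\lambda|\de u|^2_{\rm HS}\bigr)\psi\overline{G}_T\varphi-\int(\psi+\tfrac12)|\de u|_{\rm HS}^2 Z_T\varphi\psi .
\end{equation*}
Here $Z_T\varphi\geq0$ and $\psi\varphi\geq0$ are admissible test functions after a density/truncation argument, using the $L^{2+\epsilon}$ integrability of $|\de u|_{\rm HS}$ to justify the terms.

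The $|\de u|^2_{\rm HS}$ terms combine to $|\de u|^2_{\rm HS}Z_T\varphi\psi(\lambda\psi-\psi-\frac12)$. This is $\leq0$ exactly when $(\lambda-1)\psi\leq\frac12$, which holds because $\psi\leq\frac12$ and $\lambda<2$. What remains is $\mu\int\psi^2Z_T\varphi$, which is the claim.

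The main obstacle is the rigorous justification of testing with $Z_T\varphi$ and $\psi\varphi$. Only weak $W^{1,2}$ inequalities with an $L^1$-type right-hand side $|\de u|^2\overline{G}_T$ are available there, so I would first truncate $\overline{G}_T$, or use its local boundedness for fixed $T$ (by the Remark, $a_t,\gamma_t$ are bounded for $t\in[T,\delta]$), and approximate the test functions by Lipschitz ones.
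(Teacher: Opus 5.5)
Your route is the paper's: shift $G_T$ by a constant, use $(p-1)A_T=2G_T+\frac{T}{\delta}\gamma_\delta-\gamma_T$ to absorb $A_T$, take $\lambda=c_\delta\frac{p+1}{p-1}<2$ and $\mu=\frac{2pK^-}{p-1}$, then pass to $Z_T=\overline G_T/\psi$ using \cref{lem:key_lemma}. Two steps need correcting, though neither changes the strategy.

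\emph{The constant.} With $c_0=\|\gamma_\delta\|_{L^\infty(B)}/(p-1)+1$, the claim $A_T\le\frac{2}{p-1}\overline G_T$ needs $\gamma_\delta\le 2c_0$. That amounts to $\|\gamma_\delta\|_{L^\infty(B)}\le\frac{2(p-1)}{p-3}$, and there is no a priori reason for this: one only knows $\gamma_\delta\le 6/\delta$, with $\delta$ small. Take $c_0\ge\frac12\|\gamma_\delta\|_{L^\infty(B)}$ from the start. Then both $A_T\le\frac{2}{p-1}\overline G_T$ and $\gamma_T\le 2G_T+\frac{T}{\delta}\gamma_\delta\le 2\overline G_T$ follow at once. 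The paper uses the smaller, $T$-dependent shift $\frac{T}{2\delta}\|\gamma_\delta\|_{L^\infty(B)}$, but a fixed constant works equally well for the uniform $L^2$ bound and for the later Moser step. Drop the non-constant alternative $G_T+\frac{1}{p-1}\gamma_\delta+\epsilon_0$, since then $\Delta\overline G_T\neq\Delta G_T$.

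\emph{The test function.} In the inequality for $\psi$ you must test with $\overline G_T\varphi=\psi Z_T\varphi$, not with $Z_T\varphi$. The identity you wrote is false: $\langle D\overline G_T,D(\psi\varphi)\rangle-\langle D\psi,D(Z_T\varphi)\rangle$ differs from $\psi^2\langle DZ_T,D\varphi\rangle$ by extra terms such as $Z_T\varphi|D\psi|^2$. The correct identity is $\psi^2\langle DZ_T,D\varphi\rangle=\langle D\overline G_T,D(\psi\varphi)\rangle-\langle D\psi,D(\psi Z_T\varphi)\rangle$.

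Your displayed inequality, with the factor $Z_T\varphi\psi$ in the last integral, is exactly what the correct test gives. So the remainder of your argument, including the pointwise check $(\lambda-1)\psi\le\frac12$, goes through. The paper handles this last step slightly differently: it first uses $f_{o_B}=\psi+\frac12\ge 2\psi$ and $\lambda<2$ to get $\Delta\psi\le-\lambda|\de u|^2_{\rm HS}\psi$, so the energy terms cancel exactly on subtraction. This is equivalent to your comparison.
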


\begin{proof}
We first observe that Proposition \ref{prop:good_distributional_bound} is in place and we choose $p>3$ and $\eta>0$ sufficiently small so that
\begin{equation*}
\lambda
:=
(1+\eta)\left(1+\frac{2}{p-1}\right)
<2.
\end{equation*}
We can then choose $\delta$ so that
\begin{equation*}
c_t\leq1+\eta
\qquad
\text{for }t\in(0,\delta).
\end{equation*}
Set
\begin{equation*}
M_\delta
:=
\|\gamma_\delta\|_{L^\infty(B)}
\leq
\frac{6}{\delta},
\qquad
\overline{G}_T
:=
G_T+\frac{T}{2\delta}M_\delta.
\end{equation*}
Since $A_T\geq0$, the identity for $A_T$ in the previous lemma gives
\begin{equation*}
A_T
\leq
\frac{2}{p-1}\overline{G}_T,
\qquad
\gamma_T
\leq
2\overline{G}_T.
\end{equation*}
Indeed,
\begin{equation*}
(p-1)A_T
\leq
2G_T+\frac{T}{\delta}M_\delta
=
2\overline{G}_T,
\end{equation*}
whereas
\begin{equation*}
\gamma_T
=
2G_T+\frac{T}{\delta}\gamma_\delta-(p-1)A_T
\leq
2\overline{G}_T.
\end{equation*}
Moreover we have
\begin{equation*}
    \|G_T\|_{L^2(B)}\leq T\int_T^\delta\frac{\gamma_t}{t^2}\de t\leq C_0\bigg(1-\frac{T}{\delta}\bigg)\leq C_0,
\end{equation*}
which proves $\sup_{T\in(0,\delta)}
\left\|\overline{G}_T\right\|_{L^2(B)}
<\infty$.
We now apply the previous lemma together with this latter inequality to get
\begin{align*}
\Delta\overline{G}_T
&\geq
-pK^-A_T
-(1+\eta)(A_T+G_T)|\de u|_{\rm HS}^2
\\
&\geq
-\underbrace{\frac{2pK^-}{p-1}}_{\mu}\,
 \overline{G}_T
-
\underbrace{
(1+\eta)\left(1+\frac{2}{p-1}\right)
}_{\lambda}
|\de u|_{\rm HS}^2\overline{G}_T,
\end{align*}
which proves \eqref{eq:improved_bound2}.
\newline
We now remove the energy density from \eqref{eq:improved_bound2} to obtain a better differential inequality.
\newline
Since we assumed
\begin{equation*}
u(\widehat{B})\subset B_s(o_B),
\qquad
s<\frac{\pi}{8},
\end{equation*}
we get
\begin{equation*}
\frac34\leq f_{o_B}\leq1,
\qquad
\frac14\leq\psi\leq\frac12.
\end{equation*}
Moreover,
\begin{equation*}
f_{o_B}\geq2\psi.
\end{equation*}
Now, by Lemma \ref{lem:key_lemma} (here we are also using that $u$ minimizes the energy in every ball among maps with the same boundary values),
\begin{equation*}
\Delta f_{o_B}
\leq
-f_{o_B} |\de u|^2_{\rm HS}.
\end{equation*}
Since $\lambda<2$ and $f_{o_B}\geq2\psi$, this becomes
\begin{equation*}
\Delta\psi
\leq
-\lambda |\de u|^2_{\rm HS}\psi,
\end{equation*}
since we clearly have $\Delta\psi=\Delta f_{o_B}$.

Let now
\begin{equation*}
\varphi\in\operatorname{Lip}_c(B),
\qquad
\varphi\geq0.
\end{equation*}

Testing \eqref{eq:improved_bound2} against $\varphi\psi$ (by approximation) we obtain
\begin{equation}
\label{eq:partial_ineq1}
\int_B
\left\langle
D\overline{G}_T,D(\varphi\psi)
\right\rangle\de\m
\leq
\int_B
\bigl(\mu+\lambda |\de u|_{\rm HS}^2\bigr)
\overline{G}_T\psi\varphi\de\m.
\end{equation}
Now we test the inequality
\begin{equation*}
\Delta\psi
\leq
-\lambda |\de u|_{\rm HS}^2\psi
\end{equation*}
against $\overline{G}_T\varphi$. We obtain
\begin{equation}
\label{partial_ineq2}
\int_B
\left\langle
D\psi,D(\overline{G}_T\varphi)
\right\rangle\de\m
\geq
\lambda\int_B
|\de u|_{\rm HS}^2\psi\overline{G}_T\varphi\de\m.
\end{equation}
Subtracting \eqref{partial_ineq2} from \eqref{eq:partial_ineq1} gives
\begin{equation*}
\int_B
\left\langle
\psi D\overline{G}_T-\overline{G}_T D\psi,
D\varphi
\right\rangle\de\m
\leq
\mu\int_B
\psi\overline{G}_T\varphi\de\m.
\end{equation*}
Since $\psi\geq\frac14$, setting $Z_T:=\overline{G}_T/\psi\in W^{1,2}_{\rm loc}(B)$, the chain rule gives
\begin{equation*}
\psi D\overline{G}_T-\overline{G}_T D\psi
=
\psi^2D\left(\frac{\overline{G}_T}{\psi}\right)
=
\psi^2DZ_T,
\end{equation*}
therefore proving \eqref{eq:Serb_ineq}.
\end{proof}

\begin{lemma}
Let
\begin{equation*}
Z\geq0\quad{\rm with}\;Z\in W^{1,2}_{\rm loc}(B)
\end{equation*}
and suppose that, for every
$\varphi\in\operatorname{Lip}_c(B)$ with $\varphi\geq0$,
\begin{equation}
\label{eq:weak_Moser}
\int_{B}
\theta\langle DZ,D\varphi\rangle\de\m
\leq
\mu\int_{B}
\theta Z\varphi\de\m,
\end{equation}
for some $\mu\geq 0$.
Assume that $\theta\in L^\infty(B)$, with
\begin{equation*}
0<\theta_0\leq\theta\leq\theta_1<+\infty\quad\m-{\rm a.e.\,in}\;B.
\end{equation*}
Then for all $B_R(x_0)$ such that $B_{2R}(x_0)\Subset B$ we have
\begin{equation}
\label{eq:giga_Harnack}
\operatorname*{ess\,sup}_{B_R(x_0)}Z
\leq
C
\left(
\fint_{B_{2R}(x_0)}Z^2\de\m
\right)^{1/2},
\end{equation}
where
\begin{equation*}
C=C(\theta_0,\theta_1,K,R,|\mu|)>0
\end{equation*}
is equibounded as $R\to 0$.
\end{lemma}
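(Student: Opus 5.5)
This is a local $L^\infty$--$L^2$ estimate for nonnegative weak subsolutions of the uniformly elliptic, weighted operator $Z\mapsto -{\rm div}(\theta DZ)-\mu\theta Z$. I will prove it by Moser iteration, in the spirit of De Giorgi--Nash--Moser. The structural inputs on the ${\rm RCD}(K,N)$ space are three. First, local volume doubling, coming from the Bishop--Gromov inequality with constants depending on $K,N$ and an upper bound on the radius. Second, the local $(1,2)$-Poincaré inequality. Third, the resulting local Sobolev inequality on balls: there are $\chi>1$ (for instance $\chi=N/(N-2)$ if $N>2$, any fixed $\chi>1$ otherwise) and $C_S$ such that
\[
\Big(\fint_{B_\varrho}|w|^{2\chi}\de\m\Big)^{\frac1{2\chi}}\leq C_S\,\varrho\Big(\fint_{B_\varrho}|Dw|^2\de\m\Big)^{\frac12}+C_S\Big(\fint_{B_\varrho}w^2\de\m\Big)^{\frac12}
\]
for $w\in W^{1,2}(B_\varrho)$ and $\varrho\leq 2R$. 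All constants here stay bounded as $R\to0$, which is the source of the equiboundedness of $C$.

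\textbf{Caccioppoli inequality.} For $\beta\geq1$, a Lipschitz cutoff $\eta$ supported in $B_{\varrho}$ with $\eta\equiv1$ on $B_{\varrho'}$ and $|D\eta|\leq 2/(\varrho-\varrho')$, and the truncation $Z_k:=\min(Z,k)$, I test \eqref{eq:weak_Moser} with $\varphi=\eta^2 Z_k^{\beta-1}Z$. This is admissible by density of Lipschitz functions in $W^{1,2}_0$, since $\varphi\geq0$ and $\varphi\in W^{1,2}$ with compact support. Expanding $D\varphi$ with the chain rule, using $\theta_0\leq\theta\leq\theta_1$ to compare both sides, and absorbing the cross term $2\eta Z_k^{\beta-1}Z\langle DZ,D\eta\rangle$ by Young, gives
\[
\int\eta^2\big|D(Z_k^{\frac{\beta-1}{2}}Z)\big|^2\de\m\leq C\,\beta^2\frac{\theta_1}{\theta_0}\int\Big(|D\eta|^2+\mu\eta^2\Big)Z_k^{\beta-1}Z^2\de\m .
\]
Truncation keeps every integral finite, and I will let $k\to\infty$ by monotone convergence only at the end of each step. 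With $w=\eta Z_k^{(\beta-1)/2}Z$, the Sobolev inequality above then yields the reverse Hölder step
\[
\|Z\|_{L^{(\beta+1)\chi}(B_{\varrho'},\fint)}\leq\Big(C\beta^2\Big(1+\frac{\varrho^2}{(\varrho-\varrho')^2}+\mu\varrho^2\Big)\Big)^{\frac1{\beta+1}}\|Z\|_{L^{\beta+1}(B_\varrho,\fint)},
\]
where doubling is used to compare averages over $B_{\varrho'}$ and $B_\varrho$. Since $\mu\varrho^2\leq4\mu R^2$ is bounded for $R$ small, the constant does not blow up as $R\to0$.

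\textbf{Iteration.} I choose $\beta_j+1=2\chi^j$ and radii $\varrho_j=R(1+2^{-j})$. Iterating the reverse Hölder step gives
\[
\|Z\|_{L^\infty(B_R)}\leq\prod_j\big(C\chi^{2j}4^j\big)^{\frac1{2\chi^j}}\Big(\fint_{B_{2R}}Z^2\de\m\Big)^{\frac12}.
\]
The product converges because $\sum_j j\chi^{-j}<\infty$, which proves \eqref{eq:giga_Harnack}.

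\textbf{Main obstacle.} I expect the delicate point to be justifying the Sobolev inequality on small balls with constants uniform in the center and the radius. This follows from doubling together with the Poincaré inequality, via the Hajłasz--Koskela theory, and the bounded-radius dependence enters only through Bishop--Gromov comparison for $K<0$. A second point is admissibility of the test functions. $Z$ is only in $W^{1,2}_{\rm loc}$, so the truncation and cutoff are needed to keep $\varphi$ in $W^{1,2}_0(B)$ with $\varphi\geq0$; then the inequality passes from $\operatorname{Lip}_c$ test functions to this $\varphi$ by approximation, because both sides are continuous in $W^{1,2}$ once $Z_k$ is bounded.
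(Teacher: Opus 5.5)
Your proposal is correct and follows the same route as the paper. Both test \eqref{eq:weak_Moser} with $\eta^2$ times a power of $Z$ to get a Caccioppoli inequality with constant $\beta^2\theta_1/\theta_0$, then combine it with the local Sobolev inequality on ${\rm RCD}(K,N)$ spaces and the classical Moser iteration. Your truncation $Z_k=\min(Z,k)$ is a welcome extra detail: it makes the test functions genuinely admissible, a point the paper only covers with the phrase ``via approximation''.
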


\begin{proof}
Let $\beta\geq2$ and let $\eta$ be a Lipschitz cutoff function.
Using $\eta^2Z^{\beta-1}$ as a test function in \eqref{eq:weak_Moser} (via approximation), we obtain
\begin{equation*}
\int_{B_{2R}(x_0)}
\eta^2
\left|D\left(Z^{\beta/2}\right)\right|^2\de\m
\leq
C\frac{\theta_1}{\theta_0}\beta^2
\int_{B_{2R}(x_0)}
\bigl(|D\eta|^2+\mu\eta^2\bigr)Z^\beta\de\m.
\end{equation*}
Indeed, using the chain rule, we have
\begin{align*}
&
\int_{B_{2R}(x_0)}
\eta^2\theta
\left\langle
DZ,D\left(Z^{\beta-1}\right)
\right\rangle\de\m
\\
&\quad+
\int_{B_{2R}(x_0)}
\theta Z^{\beta-1}
\left\langle DZ,D(\eta^2)\right\rangle\de\m
\\
&\leq
\mu\int_{B_{2R}(x_0)}
\theta Z^\beta\eta^2\de\m.
\end{align*}

Now
\begin{equation*}
D\left(\eta^2Z^{\beta-1}\right)
=
2\eta Z^{\beta-1}D\eta
+
(\beta-1)\eta^2Z^{\beta-2}DZ,
\end{equation*}
so that
\begin{align*}
(\beta-1)
\int_{B_{2R}(x_0)}
\theta\eta^2Z^{\beta-2}|DZ|^2\de\m
&\leq
2\int_{B_{2R}(x_0)}
\theta\eta Z^{\beta-1}|DZ|\,|D\eta|\de\m
\\
&\quad+
\mu\int_{B_{2R}(x_0)}
\theta\eta^2Z^\beta\de\m.
\end{align*}
Using Young's inequality and absorbing terms gives
\begin{align*}
\int_{B_{2R}(x_0)}
\theta\eta^2Z^{\beta-2}|DZ|^2\de\m
&\leq
\frac{C}{(\beta-1)^2}
\int_{B_{2R}(x_0)}
\theta Z^\beta|D\eta|^2\de\m
\\
&\quad+
\frac{C\mu}{\beta-1}
\int_{B_{2R}(x_0)}
\theta\eta^2Z^\beta\de\m.
\end{align*}
Since
\begin{equation*}
\left|D\left(Z^{\beta/2}\right)\right|^2
=
\frac{\beta^2}{4}
Z^{\beta-2}|DZ|^2,
\end{equation*}
we conclude with the previous estimate, using also
\begin{equation*}
\theta_0\leq\theta\leq\theta_1.
\end{equation*}
Finally \eqref{eq:giga_Harnack} follows from the local Sobolev inequality on ${\rm RCD}(K,N)$ spaces, along with the classical De Giorgi-Nash-Moser iteration.
\end{proof}

\begin{proposition}
\label{prop:unif_bounds_HL}
For every $V\Subset B$, there exist $\delta>0$ and $C_V>0$ such that
\begin{equation*}
\sup_{0<T<\delta}
\|\gamma_T\|_{L^\infty(V)}
\leq C_V.
\end{equation*}
\end{proposition}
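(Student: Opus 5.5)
We combine the preceding Proposition with the Moser-type Lemma. Fix $V\Subset B$ and let $\delta>0$, $\lambda<2$, $\mu\geq0$ be as in the Proposition, so that for every $T\in(0,\delta)$ we have $\overline{G}_T\geq0$, $\overline{G}_T\in W^{1,2}(B)$, and the weighted inequality \eqref{eq:Serb_ineq} for $Z_T=\overline{G}_T/\psi$. This is exactly \eqref{eq:weak_Moser} with $\theta:=\psi^2$ and $Z:=Z_T$. Since $\frac14\leq\psi\leq\frac12$ on $B$, we may take $\theta_0=\frac1{16}$ and $\theta_1=\frac14$, independently of $T$. Moreover $Z_T\in W^{1,2}_{\rm loc}(B)$ and $Z_T\geq0$, because $\psi$ is bounded below and lies in $W^{1,2}$ by Lemma \ref{lem:key_lemma}.

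Next we cover $\overline V$ by finitely many balls $B_{R}(x_i)$ with $B_{2R}(x_i)\Subset B$, using a common radius $R$ determined by ${\rm dist}(V,\X\setminus B)$. On each ball \eqref{eq:giga_Harnack} gives
\begin{equation*}
\operatorname*{ess\,sup}_{B_R(x_i)}Z_T\leq C\left(\fint_{B_{2R}(x_i)}Z_T^2\de\m\right)^{1/2}\leq C\,\m(B_{2R}(x_i))^{-1/2}\,4\,\|\overline{G}_T\|_{L^2(B)}.
\end{equation*}
The constant $C$ depends only on $\theta_0,\theta_1,K,R,\mu$, and none of these depends on $T$. By \eqref{eq:G_T_bounded}, $\sup_T\|\overline{G}_T\|_{L^2(B)}<\infty$. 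It follows that $\sup_{T<\delta}\|Z_T\|_{L^\infty(V)}<\infty$, and hence $\overline{G}_T=\psi Z_T\leq\frac12 Z_T$ is also uniformly bounded on $V$.

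Finally, \eqref{eq:G_T_bounded} gives $0\leq\gamma_T\leq2\overline{G}_T$ $\m$-a.e., which yields $\|\gamma_T\|_{L^\infty(V)}\leq C_V$ for all $T\in(0,\delta)$. Nonnegativity of $\gamma_T$ holds because $f_T\leq0$, as observed when $f_t$ was introduced.

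The main point needing care is the uniformity of every constant in $T$. The measures $\m(B_{2R}(x_i))$ are positive and finitely many, so they cause no trouble, and the uniformity of the Moser constant is guaranteed by the previous Lemma. The more delicate issue is that $\overline{G}_T$ is only shown to be in $W^{1,2}(B)$ for each fixed $T$, with bounds depending on $T$. This is harmless, since the Moser Lemma uses only the qualitative Sobolev regularity of $Z_T$, while the quantitative input is the $T$-independent $L^2$ bound. If needed, $\delta$ should be shrunk once, independently of $V$, so that the preceding Proposition applies.
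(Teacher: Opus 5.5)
Your proposal is correct and follows the paper's proof essentially verbatim. Both arguments cover $V$ by finitely many balls with $B_{2R}(x_i)\Subset B$, apply the Moser lemma to $Z_T$ with $\theta=\psi^2$, $\theta_0=\tfrac1{16}$, $\theta_1=\tfrac14$, use $Z_T^2\leq 16\,\overline{G}_T^{\,2}$ together with the $T$-independent $L^2$ bound \eqref{eq:G_T_bounded}, and conclude via $\overline{G}_T=\psi Z_T$ and $\gamma_T\leq 2\overline{G}_T$. The paper adds one remark: the H\"older continuity of $u$, hence of $\gamma_T$, turns the essential-supremum bound into the pointwise bound $\gamma_t(x)\leq C_V$ for every $x\in V$, which is what the proof of Theorem~\ref{thm:Lipschitz_continuity} actually uses; the $L^\infty$ statement itself does not need this.
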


\begin{proof}
Cover $V$ with finitely many balls $B_R(x_i)$ such that
\begin{equation*}
B_{2R}(x_i)\Subset B.
\end{equation*}
Apply the previous lemma to $Z_T$ with $\theta=\psi^2$, so that
\begin{equation*}
\theta_0
:=
\frac{1}{16}
\leq
\psi^2
\leq
\frac14
=:
\theta_1.
\end{equation*}
Now
\begin{equation*}
Z_T^2
=
\frac{\overline{G}_T^{\,2}}{\psi^2}
\leq
16\overline{G}_T^{\,2}.
\end{equation*}
The $L^2$-norms of $\overline{G}_T$ are uniformly bounded thanks to \eqref{eq:G_T_bounded}, i.e. $\sup_{T\in(0,\delta)}
\left\|\overline{G}_T\right\|_{L^2(B)}
<\infty$. We can then apply $\eqref{eq:giga_Harnack}$ and infer
\begin{equation*}
Z_T\in L^\infty\bigl(B_R(x_i)\bigr)
\end{equation*}
with a bound independent of $T$ and depending on $\|Z_T\|_{L^2(B)}$. Since
\begin{equation*}
\overline{G}_T=\psi Z_T
\qquad\text{and}\qquad
\gamma_T\leq2\overline{G}_T,
\end{equation*}
we obtain the claim, exploting also the H\"older continuity of $u$ (which allows to use a supremum instead of an essential supremum).
\end{proof}
We are finally ready to prove the main Theorem.
\begin{proof}[Proof of Theorem \ref{thm:Lipschitz_continuity}]
Fix $\hat{x}\in V\Subset B$ 
satisfying the nested ball assumptions introduced at the end of Section \ref{sec:preliminaries} together with $u(\widehat{B})\subset B_s(u(\hat{x}))$, with $s<\pi/8$.
Then, by Proposition \ref{prop:unif_bounds_HL},
\begin{equation*}
\gamma_t(x)\leq C_V
\qquad
\text{for every }x\in V,\quad t\in(0,\delta),
\end{equation*}
for some $\delta>0$. Let
\begin{equation*}
r=\d_\X(x,y),
\qquad
x,y\in V.
\end{equation*}
Assume first that $r\in(0,\delta)$. Then
\begin{align*}
f_r(x)
&\leq
\frac{r^p}{p\,r^{p-1}}
-
F\left(\d_\Y\bigl(u(y),u(x)\bigr)\right)
\\
&=
\frac{r}{p}
-
F\left(\d_\Y\bigl(u(y),u(x)\bigr)\right).
\end{align*}
Since
\begin{equation*}
f_r=-r\gamma_r,
\end{equation*}
it follows that
\begin{equation*}
c\,\d_\Y\bigl(u(y),u(x)\bigr)
\leq
F\left(\d_\Y\bigl(u(y),u(x)\bigr)\right)
\leq
r\left(\frac1p+\gamma_r(x)\right)
\leq
\left(\frac1p+C_V\right)r.
\end{equation*}
If instead $r\geq\delta$, then
$u(V)\subset B_s\bigl(u(\hat{x})\bigr)$ gives
\begin{equation*}
\d_\Y\bigl(u(y),u(x)\bigr)
\leq
2s
\leq
\frac{2s}{\delta}\,\d_\X(x,y).
\end{equation*}
Since $x_0$ was arbitrary, $u$ is locally Lipschitz in $\Omega$.
\end{proof}

\noindent \textbf{Acknowledgements.} 
This work was supported by UK Research and Innovation (UKRI) under the Horizon Europe funding guarantee [grant number EP/Z000297/1]. The author wishes to thank Nicola Gigli for helpful comments on a preliminary draft of this work.
\bibliography{references}
            \bibliographystyle{alpha}

\end{document}